\documentclass{article}
\usepackage{amsmath, amssymb}
\usepackage{graphicx}
\usepackage{amsmath, amssymb, amsthm}
\usepackage{mathrsfs}
\usepackage{xcolor, hyperref}
\usepackage[dvipsnames]{xcolor}
\usepackage[margin=1in]{geometry}
\newtheorem{theorem}{Theorem}
\newtheorem{lemma}[theorem]{Lemma}

\newtheorem*{remark}{Remark}
\newtheorem*{conjecture}{Conjecture}

\title{Two new proofs of Chui's Conjecture in Weighted Bergman Spaces}
\author{Georgia Corbett}

\begin{document}

\maketitle

\begin{abstract}
Chui's conjecture asks whether the average electrostatic field generated by $N$ unit point charges on the unit circle is minimized when the charges are equally spaced. Abakumov, Borichev, and Fedorovskiy proved an analogue of this conjecture in weighted Bergman spaces, showing that the corresponding norm is minimized uniquely by the $N$th roots of unity. 

In this paper, we give two alternative approaches to this weighted minimization problem. The first is a direct calculus-based proof. We reduce the norm to a pairwise interaction energy, identify the equally spaced configuration as a critical point, and analyze the resulting gradient system and Hessian. The second approach connects the problem with discrete energy minimization on spheres by Cohn and Kumar. By rewriting the Bergman interaction as a completely monotonic function of squared Euclidean distance, we recover the minimizing property of the regular $N$-gon. This second argument also extends the minimization result beyond the concave-weight setting of Abakumov, Borichev, and Fedorovskiy, including the power weights $g_\alpha(t)=t^\alpha$ for every $\alpha>0$.
\end{abstract}
\section{Introduction}

In 2020, Abakumov, Borichev, and Fedorovskiy studied a weighted Bergman space analogue of Chui's conjecture. For a fixed positive integer $N$, consider a simplest fraction of the form
\[
\sum_{0\leq k<N}\frac{1}{z-a_k},
\]
where the poles $a_k\in\mathbb{C}$. Let $g$ be a positive, integrable function on $[0,1]$. The corresponding generalized weighted Bergman space is
\[
A^2_{(g)}(\mathbb{D})=\left\{f\in\operatorname{Hol}(\mathbb{D}):\|f\|_{(g)}^2=K_g\iint_{\mathbb{D}}|f(z)|^2 g(1-|z|^2)\,dA(z)<\infty\right\},
\]
where $K_g=\left(\int_0^1 g(t)\,dt\right)^{-1}.$ Restricting the poles $a_k$ to the unit circle $\mathbb{T}$, Abakumov, Borichev, and Fedorovskiy asked which configuration of the $N$ poles minimizes the corresponding weighted Bergman norm. Abakumov, Borichev, and Fedorovskiy main result is as follows

\begin{theorem}[Abakumov, Borichev, Fedorovskiy]
\label{ABF Theorem 1}
Let $g\not\equiv 0$ be a concave, non-decreasing function on $[0,1]$
satisfying $g(0)=0$ and
\begin{equation}\label{g condition}
\int_0^1 \frac{g(s)}{s}\,ds<\infty.
\end{equation}

Then, for every integer $N\geq 1$ and every family of points
$\{a_k\}_{0\leq k<N}$ on the unit circle,
\[
\left\|
\sum_{0\leq k<N}
\frac{1}{z-a_k}
\right\|_{(g)}
\geq
\left\|
\sum_{0\leq k<N}
\frac{1}{z-e^{2\pi i k/N}}
\right\|_{(g)}.
\]
Furthermore, if $\{a_k\}_{0\leq k<N}$ minimizes the weighted Bergman norm,
then the points are equispaced on $\mathbb{T}$.
\end{theorem}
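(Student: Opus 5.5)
The plan is to rewrite the squared norm as a pairwise interaction energy on the circle, with a kernel depending only on angular differences, and then show that this kernel is convex. Once convexity is known, minimality of the equispaced configuration follows from a Jensen-type argument on the gaps.

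\emph{Step 1: reduce to power sums.} For $|z|<1$ and $|a|=1$ we have $\frac{1}{z-a}=-\sum_{n\ge0}\bar a^{\,n+1}z^n$. Hence
\[
f(z)=\sum_k\frac{1}{z-a_k}=-\sum_{n\ge0}\overline{p_{n+1}}\,z^n,\qquad p_m=\sum_k a_k^m .
\]
The weight is radial and the monomials are orthogonal, so
\[
\|f\|_{(g)}^2=\sum_{m\ge1}w_m|p_m|^2,\qquad w_m=c\,K_g\int_0^1 r^{m-1}g(1-r)\,dr .
\]
This uses the substitution $r=|z|^2$, and $c$ is a normalising constant from $dA$. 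Summing the weights gives
\[
\sum_m w_m=cK_g\int_0^1 \frac{g(s)}{s}\,ds<\infty
\]
by \eqref{g condition}. Therefore the norm is finite for every configuration on $\mathbb{T}$, even though the poles lie on the boundary.

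\emph{Step 2: pairwise form.} Write $a_k=e^{i\theta_k}$. Then $|p_m|^2=\sum_{k,l}\cos m(\theta_k-\theta_l)$, and so
\[
\|f\|_{(g)}^2=N\,F(0)+\sum_{k\neq l}F(\theta_k-\theta_l),\qquad F(\theta)=\sum_{m\ge1}w_m\cos m\theta .
\]
The series for $F$ converges absolutely by Step 1. Summing the geometric series gives the closed form
\[
F(\theta)=cK_g\int_0^1 g(1-r)\,\frac{\cos\theta-r}{1-2r\cos\theta+r^2}\,dr .
\]
It now suffices to show that $F$ is strictly convex on $(0,2\pi)$. Granting this, order the points so that $0\le\theta_0<\dots<\theta_{N-1}<2\pi$ and fix $1\le j\le N-1$. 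The $N$ cyclic $j$-step gaps $\theta_{k+j}-\theta_k \pmod{2\pi}$ lie in $(0,2\pi)$ and sum to $2\pi j$. Jensen's inequality then gives
\[
\sum_k F(\theta_{k+j}-\theta_k)\ge N\,F(2\pi j/N).
\]
Summing over $j$ yields exactly the energy of the $N$th roots of unity. Strict convexity in the case $j=1$ forces all consecutive gaps to be equal, which gives the uniqueness part of Theorem~\ref{ABF Theorem 1}.

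\emph{Step 3: convexity of $F$, the main obstacle.} Differentiating the Fourier series directly gives $F''(\theta)=-\sum m^2w_m\cos m\theta$, whose sign is not visible term by term. This is where concavity and monotonicity of $g$ have to enter. I would write $g(t)=\int_0^t g'(s)\,ds$ with $g'\ge0$ nonincreasing, using $g(0)=0$. Integrating by parts in $r$ in the closed form for $F$, once or twice, moves the derivatives onto $g$. This should express $F$ as a superposition, with nonnegative weights $g'$ and $-dg'$, of elementary kernels. These kernels are explicit: essentially integrals $\int_{1-s}^1\frac{\cos\theta-r}{1-2r\cos\theta+r^2}\,dr$ and their iterated versions, which are logarithms such as $-\tfrac12\log(1-2\rho\cos\theta+\rho^2)$ together with linear terms. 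Each such kernel should be directly checkable to be convex in $\theta$ on $(0,2\pi)$; for example, $-\log|1-\rho e^{i\theta}|$ has second derivative $\frac{\rho(\cos\theta(1+\rho^2)-2\rho)}{(1-2\rho\cos\theta+\rho^2)^2}$ up to sign, which must be handled together with the boundary terms.

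Verifying convexity of these building blocks is the delicate point. It may require choosing the integration by parts so that the limiting kernel at $\rho=1$, namely $-\log|2\sin(\theta/2)|$, which is convex, appears with positive weight. Strictness should come from $g\not\equiv0$. If this route fails, the fallback is to show directly that the Fourier coefficients $w_m$ form a sequence for which the associated cosine series is convex off $0$, using $g$ concave to control second differences of $w_m$.
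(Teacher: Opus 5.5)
Your Steps 1 and 2 are correct. Once strict convexity of $F$ on $(0,2\pi)$ is known, your Jensen argument over the cyclic $j$-step gaps (each family of $N$ gaps sums to $2\pi j$) is valid. It is much shorter than the paper's analysis of critical points, of $\ker(A)$ and of the Hessian, and it covers general concave $g$ rather than only $t^\alpha$. Coincident poles cause no trouble: $\sum_m w_m<\infty$ makes $F$ continuous on $[0,2\pi]$, so (strict) convexity persists on the closed interval.

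The gap is Step 3, which carries the entire analytic content of the theorem. It is Lemma 12 of Abakumov--Borichev--Fedorovskiy, which the paper does not prove but imports (for $g=t^\alpha$) as Lemma~\ref{ABF convexity lemma}. As it stands you only reduce the theorem to this lemma, and the building blocks you propose are not convex. Because $g(0)=0$, one integration by parts gives, with no boundary terms,
\[
F(\theta)=cK_g\int_0^1 g'(1-\rho)\,\Lambda_\rho(\theta)\,d\rho,\qquad \Lambda_\rho(\theta):=-\log|1-\rho e^{i\theta}|,
\]
and, as you computed,
\[
\Lambda_\rho''(\theta)=\frac{\rho\bigl(2\rho-(1+\rho^2)\cos\theta\bigr)}{(1-2\rho\cos\theta+\rho^2)^2}.
\]
This is negative whenever $\cos\theta>2\rho/(1+\rho^2)$, in particular near $\theta=0$ for every $\rho<1$. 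So no argument that uses only $g'\geq 0$ can work. Indeed $g(t)=t^\alpha$ with $\alpha>1$ is nondecreasing, yet by Lemma~\ref{ABF convexity lemma} its kernel is not strictly convex.

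Concavity must enter through a second integration by parts. This writes $F/(cK_g)$ as $g'(1^-)L_1+\int_{(0,1)}L_\tau\,d(-g')(\tau)$, where $L_\tau(\theta)=\int_{1-\tau}^1\Lambda_\rho(\theta)\,d\rho$ is the kernel of $g(u)=\min(u,\tau)$. You must then prove $L_\tau''>0$ on $(0,2\pi)$ for every $\tau\in(0,1]$. This is doable but is real work. In $L_\tau''=\int_{1-\tau}^1\Lambda_\rho''\,d\rho$ the integrand is positive when $\cos\theta\leq 0$. For $\cos\theta>0$ it changes sign exactly once in $\rho$, from negative to positive, so either it is positive on all of $(1-\tau,1)$ or $L_\tau''(\theta)\geq L_1''(\theta)$. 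Everything then hinges on the positivity on $(0,2\pi)$ of the explicit function
\[
L_1''(\theta)=-\tfrac{1}{2}-\cos\theta\,\log\bigl(2\sin\tfrac{\theta}{2}\bigr)+\tfrac{1}{2}(\pi-\theta)\sin\theta .
\]
This is the case $g(u)=u$; it equals $\log 2-\tfrac{1}{2}$ at $\theta=\pi$, and its positivity still has to be verified.

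Alternatively, the paper's second proof avoids convexity altogether. In the variable $t=\cos\theta$ the kernel $(t-s)/(1+s^2-2st)$ has $k$th derivative $k!(2s)^{k-1}(1-s^2)/(1+s^2-2st)^{k+1}\geq 0$, so the energy is a completely monotonic function of squared chordal distance. Cohn--Kumar's universal optimality of the regular $N$-gon then gives the inequality for every nonnegative $g$ satisfying \eqref{g condition}, concave or not. The paper derives uniqueness there only for power weights.
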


Their proof makes essential use of the Hilbert space structure of the weighted Bergman space together with convexity arguments \cite{ABF2021}.

The purpose of this paper is to give two substantially different approaches to this minimization problem. The first is a direct, calculus-based proof of Theorem \ref{ABF Theorem 1} when $g(t)=t^\alpha$ for $\alpha\in (0,1]$. We formulate the problem as a multivariable optimization problem, analyze the resulting coupled gradient system, and study the Hessian at the equispaced configuration. 
Our second approach connects the weighted Bergman problem with the theory of optimal point configurations on spheres. By rewriting the interaction between the poles as a pairwise energy depending on squared Euclidean distance, we apply the universal optimality theorem of Cohn and Kumar \cite{cohnKumar2007}. In addition to giving a second proof of the minimizing property of the equally spaced configuration, this argument extends the range of weights for which the minimization conclusion holds. In particular, for the power weights $g_\alpha(t)=t^\alpha$, the Cohn, Kumar approach yields the minimizing property for every $\alpha>0$, rather than only the range $0<\alpha\leq1$ covered by the convexity argument of Abakumov, Borichev, and Fedorovskiy.

It is important to note that the weighted Bergman problem studied by Abakumov, Borichev, and Fedorovskiy originates from a natural physical question posed by Chui: how should one distribute $N$ point charges along the unit circle $\mathbb{T}$ to minimize the average strength of the resulting electrostatic field within the unit disk $\mathbb{D}$? In 1971, Chui conjectured that this average field strength reaches its absolute minimum when the point charges are equally spaced along the boundary \cite{chui1971}.
\begin{conjecture}[Chui's Conjecture]
For any positive integer $N$, and for any family of points $\{a_k\}_{0\leq k<N}$ on the unit circle,
\[\left\|\sum_{0\leq k<N}\frac{1}{z-a_k}\right\|_{L^1(\mathbb{D})}\geq\left\|\sum_{0\leq k<N}\frac{1}{z-e^{2\pi i k/N}}\right\|_{L^1(\mathbb{D})}.\]
\end{conjecture}

Chui's conjecture also has a deep history with the study of approximation properties of simplest fractions. Approximation by simplest fractions is also closely connected to polynomial approximation and the distribution of zeros. Chui originally framed his conjecture in the context of the density of simplest fractions with poles on the unit circle in the Bergman space $A^1(\mathbb{D})$. Shortly after the conjecture was posed, Newman proved in 1972 that this family is not dense in $A^1(\mathbb{D})$ by establishing a uniform lower bound \cite{Newman1972}. The connection between point distributions, constrained polynomial approximation, and simplest fractions predates Chui's conjecture; see, for example, Korevaar \cite{Korevaar1964}.

Despite this elementary formulation, Chui's original conjecture remained open for over fifty years. Recently, Bezrukov gave a proof of a generalized $L^1$ result in a September 2026 preprint, together with a generalization to a broader class of functionals, including weighted $L^p$ norms with radially symmetric weights \cite{Bezrukov2026}. This development places the weighted Bergman minimization problem in a broader family of extremal problems for simplest fractions and provides a fourth proof of the minimizing result of Abakumov, Borichev, and Fedorovskiy. Further, related work by Doubtsov, Tselishchev, and Vasilyev studies a weighted version of Chui's conjecture for arbitrary positive charges on the boundary of the unit ball, proving an analogue of Newman's lower bound in higher dimensions and showing that the estimate is sharp in the two-dimensional case \cite{DoubtsovTselishchevVasilyev2026}.

\subsection{The Radial Weight Setting and Convexity}

For our first proof, we specialize Theorem~\ref{ABF Theorem 1} to the standard weighted Bergman spaces
$A^2_\alpha(\mathbb{D})$, obtained by taking
\[g_\alpha(t)=t^\alpha.\]
In this case,  $K_{g_\alpha}=\left(\int_0^1 t^\alpha\,dt\right)^{-1}=\alpha+1,$ and hence
\[
\|f\|_\alpha^2
=
(\alpha+1)
\iint_{\mathbb{D}}
|f(z)|^2
(1-|z|^2)^\alpha\,dA(z).
\]

The condition \eqref{g condition} becomes
\[
\int_0^1 s^{\alpha-1}\,ds<\infty,
\]
which holds precisely when $\alpha>0$. Thus simplest fractions with poles on $\mathbb{T}$ belong to $A^2_\alpha(\mathbb{D})$ for every $\alpha>0$ \cite{ABF2021}.

For the radial weight $g_\alpha(t)=t^\alpha$, define the function
\[
\phi_\alpha(\theta)
:=
\int_0^1
\frac{\cos\theta-s}
{1+s^2-2s\cos\theta}
(1-s)^\alpha\,ds.
\]
A key ingredient in the proof of Abakumov, Borichev, and Fedorovskiy is the strict convexity of this interaction function. We will use the following specialization of Lemma 12 in \cite{ABF2021} throughout our calculus-based proof.

\begin{lemma}[Abakumov, Borichev, Fedorovskiy]
\label{ABF convexity lemma}
For the weight $g_\alpha(t)=t^\alpha$, $\alpha>0$, the
    corresponding function $\phi_\alpha$ is strictly convex on $(0,2\pi)$ if and only if $\alpha\in(0,1].$

   
\end{lemma}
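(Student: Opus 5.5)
The plan is to trade the rational kernel for a logarithmic one, differentiate twice in $\theta$, and then study the sign of the resulting integral. There are two regimes: a positivity argument for $\alpha\le 1$, and a local blow-up analysis near $\theta=0$ for $\alpha>1$.

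\emph{Logarithmic form.} Write $D_s(\theta)=1+s^2-2s\cos\theta=|1-se^{i\theta}|^2$. Then
\[
\frac{\cos\theta-s}{D_s(\theta)}=-\frac12\,\partial_s\log D_s(\theta).
\]
I integrate by parts in $s$. For $\theta\in(0,2\pi)$ and $\alpha>0$ the boundary terms vanish: at $s=0$ because $\log D_0=0$, and at $s=1$ because $(1-s)^\alpha\to0$ while $\log D_1(\theta)$ is finite. This gives
\[
\phi_\alpha(\theta)=-\alpha\int_0^1(1-s)^{\alpha-1}\log|1-se^{i\theta}|\,ds .
\]
A direct computation gives $\partial_\theta^2\tfrac12\log D_s=s\big((1+s^2)\cos\theta-2s\big)/D_s^2$. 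Hence
\[
\phi_\alpha''(\theta)=\alpha\int_0^1(1-s)^{\alpha-1}\,\frac{s\big(2s-(1+s^2)\cos\theta\big)}{D_s(\theta)^2}\,ds,
\]
and the whole question is the sign of this integral on $(0,2\pi)$. As a sanity check, expanding in $s$ gives $\phi_\alpha(\theta)=\sum_{n\ge1}B(n,\alpha+1)\cos n\theta$, which confirms the normalization.

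\emph{The case $0<\alpha\le1$.} First I treat $\alpha=1$. There $\Phi(\theta)=-\int_0^1\log|1-se^{i\theta}|\,ds$ is elementary, namely the real part of an explicit expression in $1-e^{i\theta}$, and one checks $\Phi''>0$ on $(0,2\pi)$ directly. More generally, I will prove that each tail functional
\[
\Phi_r(\theta)=-\int_r^1\log|1-se^{i\theta}|\,ds,\qquad 0\le r<1,
\]
is strictly convex on $(0,2\pi)$. One route is via the closed form of $\partial_\theta^2$ of the primitive in $s$. Another is to note that the integrand of $\Phi_r''$ is an exact $s$-derivative plus a nonnegative remainder.

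For $\alpha<1$ the density $w(s)=\alpha(1-s)^{\alpha-1}$ is positive and increasing. So I write $w(s)=\alpha+\int_0^s dw(r)$, i.e.
\[
w=\alpha\,\mathbf 1_{[0,1)}+\int_{(0,1)}\mathbf 1_{(r,1)}\,dw(r).
\]
Then $\phi_\alpha=\alpha\Phi_0+\int\Phi_r\,dw(r)$ is a positive superposition of strictly convex functions, hence strictly convex. Fubini is justified by the local integrability bounds already used above.

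\emph{Failure for $\alpha>1$.} I look near $\theta=0^+$ and substitute $s=1-\theta t$. Since $D_s(\theta)\approx\theta^2(t^2+1)$ and $2s-(1+s^2)\cos\theta\approx\theta^2(1-t^2)$, one obtains
\[
\phi_\alpha''(\theta)\sim\alpha\,\theta^{\alpha-2}\int_0^\infty t^{\alpha-1}\,\frac{1-t^2}{(1+t^2)^2}\,dt .
\]
This is valid when the $t$-integral converges, i.e. for $0<\alpha<2$. The integral equals a Beta-type constant proportional to $\Gamma(\tfrac\alpha2)\Gamma(2-\tfrac\alpha2)(1-\alpha)$, which is negative precisely for $\alpha>1$. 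Thus $\phi_\alpha''<0$ near $0$, and $\phi_\alpha$ is not convex. For $\alpha\ge2$ the dominated limit as $\theta\to0$ is instead
\[
\alpha\int_0^1(1-s)^{\alpha-3}s\bigl(-(1-s)^2\bigr)\,ds<0
\]
(with $\alpha=2$ handled by the logarithmic version of the same scaling). So convexity fails again.

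The main obstacle is the positivity step for the tails $\Phi_r$ uniformly in $r$. There the integrand changes sign in $s$, so one must exploit the cancellation. I would first try to write $s(2s-(1+s^2)\cos\theta)/D_s^2$ as $\partial_s$ of an explicit function, and then check that the resulting boundary expression at $s=r$ is positive.
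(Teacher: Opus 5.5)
The paper gives no proof of this lemma; it imports it from Lemma~12 of \cite{ABF2021}. Your proposal is therefore a genuinely different, self-contained route. Most of its skeleton is correct:
\begin{itemize}
\item the integration by parts;
\item the formula $\phi_\alpha''=\alpha\int_0^1(1-s)^{\alpha-1}k\,ds$ with $k(s,\theta)=s\bigl(2s-(1+s^2)\cos\theta\bigr)/D_s(\theta)^2$;
\item the scaling $s=1-\theta t$ for $1<\alpha<2$. The constant is in fact $\int_0^\infty t^{\alpha-1}\frac{1-t^2}{(1+t^2)^2}\,dt=\frac{\pi(1-\alpha)}{2\sin(\pi\alpha/2)}$.
\end{itemize}
However, the half of the lemma that is actually hard is left unproved. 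That is positivity of $\Phi_r''$ for every $r\in[0,1)$, including $r=0$, i.e.\ $\alpha=1$. You flag it yourself as the main obstacle, and the routes you suggest (an explicit $s$-primitive, an exact derivative plus a remainder) only restate the claim.

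The missing idea is the sign pattern of $k$ in $s$.
\begin{itemize}
\item If $\cos\theta\le0$, then $k>0$ on $(0,1)$.
\item If $\cos\theta>0$, the roots of $(\cos\theta)s^2-2s+\cos\theta$ are $(1\pm\sin\theta)/\cos\theta$. Hence $k<0$ on $(0,s_0)$ and $k>0$ on $(s_0,1)$, where $s_0=\tan(\pi/4-\theta/2)\in(0,1)$.
\end{itemize}
So $\Phi_r''=\int_r^1k\,ds$ is positive for $r\ge s_0$, and is at least $\int_0^1k\,ds=\phi_1''(\theta)$ for $r<s_0$. Uniformity in $r$ is therefore automatic.

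Better still, you can drop the superposition and Fubini entirely. For any nondecreasing $w\ge0$ one has $w(s)k(s,\theta)\ge w(s_0)k(s,\theta)$ pointwise, hence $\phi_\alpha''(\theta)\ge\alpha(1-s_0)^{\alpha-1}\phi_1''(\theta)$ for $0<\alpha\le1$. With $w(s)=g'(1-s)$, this also shows why concavity of $g$ is the natural hypothesis in ABF.

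Everything then rests on $\phi_1''>0$, and your ``one checks directly'' still needs content. Evaluating $\int_0^1k(s,\theta)\,ds$ explicitly gives
\[
\phi_1''(\theta)=\tfrac12\bigl((\pi-\theta)\sin\theta-1\bigr)-\cos\theta\,\log\bigl(2\sin\tfrac{\theta}{2}\bigr),\qquad 0<\theta<2\pi .
\]
By the symmetry $\theta\mapsto2\pi-\theta$ and the sign of $k$, only $(0,\pi/2)$ needs checking, and an elementary split works there:
\begin{itemize}
\item For $\theta\le0.37$, the logarithmic term exceeds $\tfrac12$, because $2\sin\tfrac\theta2\le\theta$.
\item On $[0.37,\pi/3]$, one has $(\pi-\theta)\sin\theta>1$ and the logarithmic term is nonnegative.
\item On $[\pi/3,\pi/2)$, one has $(\pi-\theta)\sin\theta\ge\pi/2$, while the logarithmic term is at least $-\tfrac14\log 2$.
\end{itemize}

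There is also a slip in your $\alpha\ge2$ case. The limit is
\[
\phi_\alpha''(0^+)=-\alpha\int_0^1 s(1-s)^{\alpha-3}\,ds=-\frac{\alpha}{(\alpha-1)(\alpha-2)},
\]
which is finite only for $\alpha>2$. Dominated convergence holds via $|k|\le2(1-s)^{-2}$ for $s\ge\tfrac12$. Your integrand carries an extra factor $(1-s)^2$, which would make the limit finite for every $\alpha>0$ and contradict your own $\theta^{\alpha-2}$ blow-up for $\alpha<2$. The sign, and hence the conclusion, is unaffected.
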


\section{Proof of Theorem \ref{ABF Theorem 1} when $\alpha\in (0,1]$}


We will prove Theorem \ref{ABF Theorem 1} when $\alpha\in (0,1]$. The proof proceeds in three main steps. We first use the Hilbert space structure of the weighted Bergman space to reduce the norm minimization problem to the study of a sum of pairwise interactions between the points on the unit circle. We then show that the equally spaced configuration, corresponding to the $N^{\text{th}}$ roots of unity, is a critical point of this interaction sum. Finally, we prove that it is the only critical point. For this uniqueness argument, we consider the cases of odd and even $N$ separately. In each case, the critical point equations are encoded as a matrix system, and an analysis of the kernel of the resulting matrix allows us to conclude that the points must be equally spaced.

Let
$$f(z) = \sum_{k=0}^{N-1} \frac{1}{z - e^{i\gamma_k}},$$
After a rotation, we may assume that $0=\gamma_0\leq\gamma_1\leq\dots\leq\gamma_{N-1}$.

Note that simplest fractions such as $f(z)$ above are not in $A^2_0$. See \cite{ABF2021}. So assume $\alpha > 0$. We will show that $\|f\|_\alpha^2$ is minimized when the points are equally spaced, equivalently, when they are the $N$th roots of unity.

First recall that in a Hilbert space, we can expand $\|f(z)\|^2_\alpha$ via the orthonormal basis of the monomials $\{z^n\}_{n=0}^\infty$. Using this expansion, we have:
\[
\|f(z)\|^2_\alpha = N \left\|\frac{1}{z-1}\right\|^2_{\alpha}+ 2\pi \;(\alpha+1)\sum_{0 \leq k < j \leq N-1} \phi_\alpha (\gamma_j - \gamma_k)
\]
where
\[
\phi_\alpha (\gamma_j - \gamma_k) = \int_0^1
\frac{\cos(\gamma_j - \gamma_k)- s}
{1 + s^2 - 2s \cos(\gamma_j - \gamma_k)}(1 - s)^\alpha \, ds.
\]

Notice for $0 < \alpha \leq 1$, $\phi_\alpha(\theta)$ is $C^\infty((0,2\pi))$. See \cite{ABF2021}. 

\begin{remark}
    It is important to note that $$\phi_\alpha=\sum_{n\geq 0} \cos((n+1)\theta)\|z^n\|_\alpha^2= \int_0^1 \frac{\cos(\theta)- s}{1+s^2-2s\cos(\theta)}(1-s)^\alpha ds.$$ 
    We use the integral form of $\phi_\alpha$ as taking derivatives of the sum forces stronger convergence conditions than needed. 
\end{remark}

So our problem reduces to minimizing
$$ \sum_{0 \leq k < j \leq N-1} \phi_\alpha(\gamma_j - \gamma_k).$$

Denote
$$G(\gamma_1, \dots, \gamma_{N-1}) = \sum_{0 \leq k < j < N} \phi_\alpha(\gamma_j - \gamma_k).$$

We claim the $N^{th}$ roots of unity are critical points:
$$\nabla G\left(\frac{2\pi}{N}, \dots, \frac{2\pi (N-1)}{N}\right) = (0, \dots, 0).$$

First, observe that $\phi_\alpha$ is even and $2\pi$-periodic, since it
depends on $\theta$ only through $\cos\theta$. Hence
\[
\phi_\alpha'(2\pi-\theta)
=
-\phi_\alpha'(\theta).
\]

For $m\in\{1,\dots,N-1\}$, differentiating $G$ with respect to
$\gamma_m$ gives
\[
\frac{\partial G}{\partial\gamma_m}
=
\sum_{\ell\neq m}
\phi_\alpha'(\gamma_m-\gamma_\ell),
\]
where the differences are modulo $2\pi$.

At the equally spaced configuration
\[
\gamma_j=\frac{2\pi j}{N},
\qquad j=0,\dots,N-1,
\]
we obtain
\[
\frac{\partial G}{\partial\gamma_m}
=
\sum_{\ell\neq m}
\phi_\alpha'\left(
\frac{2\pi(m-\ell)}{N}
\right).
\]
As $\ell$ ranges over $\{0,\dots,N-1\}\setminus\{m\}$, the quantity
$m-\ell$ modulo $N$ ranges over $1,\dots,N-1$. Therefore
\[
\frac{\partial G}{\partial\gamma_m}
=
\sum_{r=1}^{N-1}
\phi_\alpha'\left(\frac{2\pi r}{N}\right).
\]

We now pair the term indexed by $r$ with the term indexed by $N-r$.
Using
\[
\phi_\alpha'\left(
\frac{2\pi(N-r)}{N}
\right)
=
\phi_\alpha'\left(
2\pi-\frac{2\pi r}{N}
\right)
=
-\phi_\alpha'\left(
\frac{2\pi r}{N}
\right),
\]
each pair cancels. If $N$ is even, the only unpaired term is $N/2$, but
\[
\phi_\alpha'(\pi)
=
-\phi_\alpha'(\pi),
\]
so $\phi_\alpha'(\pi)=0$. Thus, in either case,
\[
\frac{\partial G}{\partial\gamma_m}=0.
\]

Since this holds for every $m=1,\dots,N-1$, we conclude that
\[\nabla G\left(
\frac{2\pi}{N},\dots,\frac{2\pi(N-1)}{N}
\right)=\mathbf 0.\]

It remains to show that this is the only critical point.

\subsection{$N$ odd}
To prove uniqueness of the critical point, we consider the cases of odd and even $N$ separately. Both cases follow the same general strategy. We first rewrite the critical-point equations as a matrix equation
\[
A\mathbf{c}=\mathbf{0}.
\]
A key observation is that we recover a block structure of our matrix that corresponds to the step distance of the interaction of points on the circle. So we analyze the kernel of $A$ and use the strict monotonicity of $\phi_\alpha'$ to show that the only possible critical configuration is the equally spaced one.

Throughout this subsection, let
$N=2M+1$ and $M=\left\lfloor \frac{N}{2}\right\rfloor=\frac{N-1}{2}.$
After a rotation, we may assume $\gamma_0=0$, and we write
\[
0=\gamma_0<\gamma_1<\cdots<\gamma_{N-1}<2\pi.
\]
Define the consecutive gaps
\[
x_k=\gamma_k-\gamma_{k-1},
\qquad\text{ for } k=1,\dots,N-1,
\]
and
\[
x_N=2\pi-\gamma_{N-1}.
\]
Thus
\[
x_1+\cdots+x_N=2\pi.
\]
All indices involving the $x_k$ will be understood modulo $N$.

\begin{lemma}
    Let $N\in \mathbb{N}$ be odd and let $\{e^{i\gamma_k}\}_{0\leq k < N} \subset \mathbb{T}.$ Then   the system from     
    \[
\nabla\left \|\sum_{0 \leq k < N} \frac{1}{z - e^{i \gamma_k}}\right\|^2_\alpha = \nabla \left( \sum_{0 \leq k < j \leq N-1} \phi_\alpha (\gamma_j - \gamma_k)\right)=0\]

 where $$\phi_\alpha (\gamma_j - \gamma_k) = \int_0^1
\frac{\cos(\gamma_j - \gamma_k)- s}
{1 + s^2 - 2s \cos(\gamma_j - \gamma_k)}(1 - s)^\alpha \, ds, $$
has the matrix representation $A\mathbf{c}=\mathbf{0}$ where

$$A=\left[ I - T_N \mid T_N^{-1} - T_N \mid T_N^{-2} - T_N \mid \dots \mid T_N^{-(M-1)} - T_N \right],$$

$$T_N= \begin{bmatrix} 0 & 1 & 0 & \dots & 0 \\ 0 & 0 & 1 & \dots & 0 \\ \vdots & & \ddots & & \vdots  \\ 0 & 0 & \dots & 0 & 1 \\ 1 & 0 & \dots & 0 & 0 \end{bmatrix} \in M_{N\times N}(\mathbb{R}) \text{ and } $$
$$\mathbf{c}= \left[c_{1,1}, c_{2,1}, \dots, c_{N,1} \mid c_{1,2}, c_{2,2}, \dots, c_{N, 2} \mid \dots \mid c_{1,M}, \dots, c_{N,M}\right]^T =[\mathbf{c}_1 \mid \mathbf{c}_2 \mid \dots \mid \mathbf{c}_M]^T.$$
For $1\leq m\leq M$, define
\[
c_{k,m}
=
\phi_\alpha'
\left(
x_k+x_{k+1}+\cdots+x_{k+m-1}
\right),
\]
and let
\[
\mathbf{c}_m
=
[c_{1,m},c_{2,m},\dots,c_{N,m}]^T.
\]
Then
\[
\mathbf{c}
=
[\mathbf{c}_1\mid\mathbf{c}_2\mid\cdots\mid\mathbf{c}_M]^T.
\]
\end{lemma}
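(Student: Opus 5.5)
The plan is to compute each partial derivative of $G$ explicitly, sort the $N-1$ interaction terms by step distance, and then read off the matrix rows. First, by the expansion already obtained, $\|f\|_\alpha^2$ differs from $2\pi(\alpha+1)G$ only by a constant independent of the $\gamma_k$. So the critical-point system is $\partial G/\partial\gamma_m=0$, where
\[
\frac{\partial G}{\partial \gamma_m}=\sum_{\ell\neq m}\phi_\alpha'(\gamma_m-\gamma_\ell),
\]
as computed above.

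Fix $m$ and let indices of points be taken modulo $N$. Since $N=2M+1$, the map $\ell\mapsto(\text{step } r,\ \text{direction})$ with $\ell=m\pm r$ and $1\le r\le M$ is a bijection of $\{0,\dots,N-1\}\setminus\{m\}$. This is exactly where oddness is used: there is no antipodal step $N/2$ to treat separately.

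For the forward neighbour $\ell=m+r$, the counterclockwise arc from $\gamma_m$ to $\gamma_{m+r}$ equals $x_{m+1}+\cdots+x_{m+r}$, and $\gamma_m-\gamma_{m+r}$ agrees with its negative modulo $2\pi$. Because $\phi_\alpha$ is even and $2\pi$-periodic, $\phi_\alpha'$ is odd and $2\pi$-periodic. Hence
\[
\phi_\alpha'(\gamma_m-\gamma_{m+r})=-\phi_\alpha'(x_{m+1}+\cdots+x_{m+r})=-c_{m+1,r}.
\]
Likewise, for the backward neighbour,
\[
\phi_\alpha'(\gamma_m-\gamma_{m-r})=\phi_\alpha'(x_{m-r+1}+\cdots+x_m)=c_{m-r+1,r}.
\]
This is valid even when the arc wraps past $\gamma_0$, because of the cyclic indexing of the $x_k$ and periodicity. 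It also uses that every argument lies in $(0,2\pi)$, where $\phi_\alpha$ is smooth. Therefore
\[
\frac{\partial G}{\partial\gamma_m}=\sum_{r=1}^{M}\bigl(c_{m-r+1,r}-c_{m+1,r}\bigr).
\]

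Next I translate this into shift matrices. Here $(T_N\mathbf v)_m=v_{m+1}$ and $(T_N^{-j}\mathbf v)_m=v_{m-j}$, with indices mod $N$. So the $r$-th summand is the $m$-th entry of $(T_N^{-(r-1)}-T_N)\mathbf c_r$. Summing over $r$ gives that the $m$-th entry of $A\mathbf c$ is $\partial G/\partial\gamma_m$. The first block is $I-T_N$ for $r=1$, and the last is $T_N^{-(M-1)}-T_N$.

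The step I expect to need the most care is the bookkeeping of rows. Row $m$ for $1\le m\le N-1$ corresponds to $\gamma_m$, and row $N\equiv 0$ corresponds to the fixed point $\gamma_0$. That extra row is not one of the equations $\nabla G=0$, since $G$ is a function of $\gamma_1,\dots,\gamma_{N-1}$ only. I will handle it via rotation invariance. Regarded as a function of all $N$ angles, $G$ is invariant under $\gamma_k\mapsto\gamma_k+t$, so the $N$ partials sum to zero. Equivalently, the row-$N$ entry of $A\mathbf c$ equals minus the sum of the other rows, since each block $T_N^{-j}-T_N$ has zero column sums. Hence the $N-1$ critical-point equations hold if and only if all $N$ rows of $A\mathbf c=\mathbf 0$ hold.

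I would also double-check the index conventions for $c_{k,m}$ in the wrap-around cases, e.g. $m-r+1\le 0$. This is what justifies writing the system uniformly with the circulant $T_N$.
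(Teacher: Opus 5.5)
Your proposal is correct and follows essentially the same route as the paper. You sort the terms of $\partial G/\partial\gamma_m$ into forward and backward $r$-step neighbours, use that $\phi_\alpha'$ is odd and $2\pi$-periodic to get $\sum_{r=1}^M\bigl(c_{m-r+1,r}-c_{m+1,r}\bigr)$, and read this off as row $m$ of $\sum_r (T_N^{-(r-1)}-T_N)\mathbf c_r$. You then dispose of the extra row for $\gamma_0$ via rotation invariance. Your explicit use of $N=2M+1$ for the bijection $\ell=m\pm r$, and your remark that each block $T_N^{-j}-T_N$ has zero column sums (so row $N$ is redundant identically in $\mathbf c$), are small but welcome tightenings of the paper's bookkeeping.
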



\begin{proof}

Since $G$ depends only on differences of the angles, it is invariant under simultaneous rotation. Hence
\[
\sum_{i=0}^{N-1}\frac{\partial G}{\partial \gamma_i}=0.
\]

After fixing $\gamma_0=0$, the critical-point equations are those corresponding to
\[
i=1,\dots,N-1.
\]
We adjoin the redundant equation
\[
\frac{\partial G}{\partial \gamma_0}
=
-\sum_{i=1}^{N-1}\frac{\partial G}{\partial \gamma_i}
=
0,
\]
which allows us to write the system in the cyclic matrix form
\[
A\mathbf{c}=\mathbf{0}.
\]
 
Then
$$\frac{\partial}{\partial \gamma_i}\left(\sum_{0\leq k<j\leq N-1} \phi_\alpha (\gamma_j-\gamma_k)\right) =\sum_{0\leq k\leq N-1} \varepsilon_i(k) \phi_\alpha'(\varepsilon_i(k)(\gamma_i-\gamma_k))$$
 
where $\varepsilon_i(k)= \begin{cases} 1 & i>k \\ -1 & i < k \end{cases}$. For a fixed step distance $m\in \{1,2,\dots, M\}$ , we have two interacting nodes with $\gamma_i$: counterclockwise $m$ steps, $\gamma_{i+m}$, and clockwise $m$ steps, $\gamma_{i-m}$.

$$\text{If } i < i+m \pmod{N} \hspace{0.25in} \text{ then } \hspace{0.25in}-\phi_\alpha  '(\gamma_{i+m}-\gamma_i)= -\phi_\alpha '(x_{i+1}+\dots+ x_{i+m})= -c_{i+1,m}.$$ 

$$\text{If } i > i+m \pmod{N}, \hspace{0.25in} \text{ then } \hspace{0.25in} \phi_\alpha  '(\gamma_i-\gamma_{i+m})=-\phi_\alpha'(2\pi -(\gamma_{i+m} - \gamma_i))= -c_{i+1,m}$$
since $\phi_\alpha'(2\pi-\theta)=-\phi_\alpha'(\theta)$.

$$\text{If } i > i-m \pmod{N} \hspace{0.25in} \phi_\alpha  '(\gamma_i-\gamma_{i-m})=\phi_\alpha '(x_{i-m+1}+\dots+x_{i})=c_{i-m+1,m}$$

$$\text{If } i < i-m \pmod{N} \hspace{0.25in} -\phi_\alpha  '(\gamma_i-\gamma_{i-m})=-(-\phi_\alpha'(2\pi -(\gamma_{i-m}-\gamma_i))=c_{i-m+1,m}$$

Then to get to the $i$th equation:
$$\frac{\partial}{\partial \gamma_i}\|f\|_\alpha^2=\sum_{m=1}^M (c_{i-m+1,m} - c_{i+1,m}) = 0$$

Now notice that for $\mathbf{c}=[\mathbf{c}_1 \mid \mathbf{c}_2 \mid \dots \mid \mathbf{c}_M]^T$, 
 we have that $T_N$ shifts the $i^{th}$ spot in the $m^{th}$ block  forwards by 1:
$$\left(T_N \mathbf{c}_{m}\right)_i = c_{(i+1) \text{mod } N,m},$$

 Similarly, we have $T_N^{-(m-1)}$ shifts  the $i^{th}$ spot in the $m^{th}$ block  backwards by $m-1$.
$$\left (T_N^{-(m-1)}\mathbf{c}_m \right )_i = \mathbf{c}_{i-m+1,m}.$$

So on the $m^{th}$ block, $\mathbf{c}_m$, we have 
$$c_{i-m+1,m} - c_{i+1,m}= \left(\left( T_N^{-(m-1)}-T_N \right )\left (\mathbf{c}_m\right )\right)_i$$

$$(A\mathbf{c})_i=\sum_{m=1}^M \left((T_N^{-(m-1)}-T_N)\mathbf{c}_m \right)_i= 0$$

Therefore, $A$ is an $N\times MN$ matrix given by
$$A\mathbf{c}= \left[I - T_N \mid T_N^{-1} - T_N \mid \dots \mid T_N^{-(m-1)} - T_N \mid \dots \mid T_N^{-(M-1)} - T_N \right] \begin{bmatrix} \mathbf{c}_1\\ \mathbf{c}_2\\ \vdots\\ \mathbf{c}_M\\ \end{bmatrix}=\mathbf{0}.$$
\end{proof}


\subsubsection{Kernel Analysis}

The matrix equation $A\mathbf{c}=\mathbf{0}$ encodes the critical-point equations for the configuration. Thus, any critical configuration determines a vector $\mathbf{c}\in\ker(A)$. To characterize the possible critical points, we therefore study the structure of $\ker(A)$. Using an explicit basis of the kernel together with the strict monotonicity of $\phi_\alpha'$, we will show that the only configuration whose associated vector $\mathbf{c}$ lies in $\ker(A)$ is the equally spaced configuration. Consequently, the $N$th roots of unity form the unique critical configuration. 

We proceed in three steps. First, we determine the dimension of $\ker(A)$. We then exhibit an explicit basis for the kernel by showing that the proposed vectors lie in $\ker(A)$ and are linearly independent. Finally, we use this basis representation together with the monotonicity of $\phi_\alpha'$ to obtain restrictions on the gaps $x_1,\dots,x_N$.

\begin{lemma}
Let
\[A=\left[I_N-T_N\;\middle|\;T_N^{-1}-T_N\;\middle|\;\cdots\;\middle|\;T_N^{-(M-1)}-T_N\right]. \]
Then
\begin{equation}\label{n odd dim}
    \dim\ker(A)=(M-1)N+1.
\end{equation}

We claim that
\begin{equation}\label{N odd span vectors}
   \ker(A)
=
\operatorname{span}\{V_0,V_1,\dots,V_{(M-1)N}\}. 
\end{equation}

We define
\[
V_0=
[\mathbf{1}\mid\mathbf{0}\mid\cdots\mid\mathbf{0}],
\]
where
\[
\mathbf{1}=(1,\dots,1)^T,
\]
together with the vectors $V_{(m-2)N+k}$, where
\[
m=2,\dots,M,
\qquad
k=1,\dots,N,
\]
defined blockwise by
\[
V_{(m-2)N+k}
=
[\mathbf{v}_{(m-2)N+k}^{(1)}\mid\cdots\mid \mathbf{v}_{(m-2)N+k}^{(M)}],
\]
with
\[
\mathbf{v}_{(m-2)N+k}^{(1)}
=
-\sum_{r=0}^{m-1}\mathbf{e}_{k+r},\]
and 
\[\mathbf{v}_{(m-2)N+k}^{(m)}=\mathbf{e}_k,
\]
and
\[
\mathbf{v}_{(m-2)N+k}^{(r)}=\mathbf{0}
\qquad
\text{for }r\neq 1,m \text{ where } \mathbf{e}_k \text{ is the standard Euclidean basis.}
\]
Here all indices are interpreted cyclically modulo $N$.

   
   

\end{lemma}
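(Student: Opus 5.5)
The plan is to compute $\operatorname{rank}(A)$ and obtain the kernel dimension by rank–nullity. Then I will check that the listed vectors lie in $\ker(A)$ and are linearly independent. Since there are exactly $(M-1)N+1$ of them, they will then form a basis.

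\emph{Step 1 (rank).} The matrix $A$ is $N\times MN$, and every block is a polynomial in the cyclic shift $T_N$. Its column space is therefore the sum of the column spaces of the blocks $T_N^{-(m-1)}-T_N$, for $m=1,\dots,M$. The first block is $I-T_N$. Since $T_N$ is a cyclic permutation matrix of order $N$, $\ker(I-T_N)=\operatorname{span}\{\mathbf 1\}$, so $\operatorname{rank}(I-T_N)=N-1$ and its range is $\mathbf 1^\perp$. Every block $T_N^{-(m-1)}-T_N$ has column sums zero, because each of its two terms is a permutation matrix. Hence $\mathbf 1^T A=0$ and the range of $A$ is contained in $\mathbf 1^\perp$. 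This gives $\operatorname{rank}(A)=N-1$, and so
\[
\dim\ker(A)=MN-(N-1)=(M-1)N+1 .
\]
The oddness of $N$ is not needed here; it was used only to set up the block structure.

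\emph{Step 2 (membership).} For $V_0$ we have $(I-T_N)\mathbf 1=\mathbf 0$. For $V_{(m-2)N+k}$ we need
\[
(I-T_N)\Big(-\sum_{r=0}^{m-1}\mathbf e_{k+r}\Big)+(T_N^{-(m-1)}-T_N)\mathbf e_k=\mathbf 0 .
\]
From the convention $(T_N\mathbf c)_i=c_{i+1}$ we get $T_N\mathbf e_j=\mathbf e_{j-1}$ and $T_N^{-1}\mathbf e_j=\mathbf e_{j+1}$, with indices taken mod $N$. The first term is a telescoping sum:
\[
-(I-T_N)\sum_{r=0}^{m-1}\mathbf e_{k+r}=-\sum_{r=0}^{m-1}(\mathbf e_{k+r}-\mathbf e_{k+r-1})=\mathbf e_{k-1}-\mathbf e_{k+m-1}.
\]
The second term is $(T_N^{-(m-1)}-T_N)\mathbf e_k=\mathbf e_{k+m-1}-\mathbf e_{k-1}$. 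The two cancel, so every listed vector lies in $\ker(A)$. Getting the shift-direction convention right in this computation is the one place where care is required.

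\emph{Step 3 (independence).} Suppose
\[
\lambda_0V_0+\sum_{m=2}^{M}\sum_{k=1}^{N}\lambda_{(m-2)N+k}V_{(m-2)N+k}=0 .
\]
For each $m\ge2$, the only vectors with a nonzero $m$th block are $V_{(m-2)N+k}$, $k=1,\dots,N$, and their $m$th blocks are $\mathbf e_k$. Looking at block $m$ therefore forces $\lambda_{(m-2)N+k}=0$ for all $k$. The remaining relation is $\lambda_0\mathbf 1=\mathbf 0$ in block $1$, which gives $\lambda_0=0$. So the family consists of $(M-1)N+1$ independent vectors in a space of that dimension, and it spans $\ker(A)$.

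The only real obstacle is the rank computation in Step 1. The key point is that the range of $I-T_N$ already fills $\mathbf 1^\perp$, so the other blocks cannot raise the rank further. This follows from the cyclic structure and from column sums being zero.
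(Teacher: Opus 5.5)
Your proof is correct and follows the paper's argument step for step: rank $N-1$ from the block $I_N-T_N$ together with a row dependency, then rank--nullity, the same telescoping membership check, and independence read off from the $m$th blocks and then block $1$. The only difference is how you justify $\operatorname{rank}(A)\le N-1$: you observe $\mathbf{1}^T A=0$ because each block is a difference of permutation matrices, while the paper appeals to the adjoined redundant equation making the last row a combination of the others. These are the same fact, and your version is the more self-contained justification.
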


\begin{proof} 

\noindent\textbf{Proof of \eqref{n odd dim}:}

We first determine the dimension that our proposed basis must have. First recall that the last row is a linear combination of the preceding rows. So
\[
\text{rank}(A)\leq N-1.
\]
Further, observe that $A$ contains $I_N-T_N$ as its first block. Since $\text{rank}(I_N-T_N)=N-1,$ it follows that $\text{rank}(A)\geq N-1.$
Therefore,
\[
N-1\leq \operatorname{rank}(A)\leq N-1,
\]
and hence
\[
\operatorname{rank}(A)=N-1.
\]


Since $A$ has $MN$ columns, the rank-nullity theorem gives
\[
\dim\ker(A)
=
MN-(N-1)
=
(M-1)N+1.
\]
Thus it is enough to exhibit $(M-1)N+1$ linearly independent vectors in $\ker(A)$.

\noindent\textbf{Proof of \eqref{N odd span vectors}:}
We now verify that the proposed vectors belong to the kernel. Clearly,
\[
AV_0=(I_N-T_N)\mathbf{1}=\mathbf{0}
\]
since $T_N$ fixes the constant vector. Now consider $V_{(m-2)N+k}$. By construction, the only nonzero blocks of this vector are the first and the $m$th blocks. Therefore,
\[
AV_{(m-2)N+k}
=
(I_N-T_N)\mathbf{v}_{(m-2)N+k}^{(1)}
+
(T_N^{-(m-1)}-T_N)\mathbf{v}_{(m-2)N+k}^{m}.
\]

Using the construction of the first block,
\[
\mathbf{v}_{(m-2)N+k}^{(1)}
=
-\mathbf{e}_k-\mathbf{e}_{k+1}-\dots-\mathbf{e}_{k+m-1},
\]
we obtain
\begin{align*}
(I_N-T_N)\mathbf{v}_{(m-2)N+k}^{(1)}
&=
(I_N-T_N)
(-\mathbf{e}_k-\mathbf{e}_{k+1}-\dots-\mathbf{e}_{k+m-1})\\
&= \mathbf{e}_{k-1}-\mathbf{e}_{k+m-1}.
\end{align*}

Also,
\[\begin{aligned}
(T_N^{-(m-1)}-T_N)v_{(m-2)N+k}^{(m)} &= (T_N^{-(m-1)}-T_N)e_k\\
&= e_{k+m-1}-e_{k-1}.
\end{aligned}\]
Therefore,
\[AV_{m,k}=\mathbf{0}\]
so every proposed vector belongs to $\ker(A)$.




It remains to show that these kernel vectors are linearly independent.

Suppose
\[
\sum_{p=0}^{(M-1)N} a_pV_p=\vec{0}.
\]

By construction, for each block $m=2,\dots,M$,
\[
\mathbf{v}_{(m-2)N+k}^{(m)}=\mathbf{e}_k,
\qquad k=1,\dots,N,
\]
and the other proposed basis vectors associated with different blocks are zero in the $m$th block.

Since $\mathbf{e}_1,\dots,\mathbf{e}_N$ are linearly independent,
\[
a_1=a_2=\dots=a_{(M-1)N}=0.
\]

This simplifies Row 1 to just $a_{0}=0$.
Thus
\[
\{V_0,V_1,\dots,V_{(M-1)N}\}
\]
is linearly independent.
\end{proof}

\subsubsection{Uniqueness of the Critical Configuration}

We now use this explicit basis to translate the condition $\mathbf{c}\in\ker(A)$ back into conditions on the gaps $x_1,\dots,x_N$.

Take any element in $\ker(A)$:
\[
a_{0}v_{0} + \sum_{m=2}^{M} \sum_{k=1}^{N} a_{(m-2)N+k}v_{(m-2)N+k}
\]
By construction, any $m \geq 2$, $1 \leq k \leq N$:
\[
a_{(m-2)N+k} = \phi_\alpha '(x_{k}+\dots+x_{k+m-1})
\]
because the $k$th coordinate of the $m$th block is contributed only by the vector $V_{(m-2)N+k}$.


Notice that the first block of $\mathbf{c}$ determines the one-step interaction terms $\phi_\alpha'(x_k)$ in terms of these coefficients.
In particular,
\begin{align*}
\phi_\alpha '(x_{1}) &= a_{0} - \left[\sum_{m=2}^{M} a_{(m-2)N+1} + a_{(m-2)N+N} + a_{(m-2)N+(N-1)} + \dots + a_{(m-2)N+N-m+2}\right] \\
\phi_\alpha '(x_{2}) &=  a_{0} - \left[\sum_{m=2}^{M} a_{(m-2)N+2} + a_{(m-2)N+1} + a_{(m-2)N+N} + \dots + a_{(m-2)N+N-m+3}\right] \\
\phi_\alpha '(x_{N}) &=  a_{0} - \left[\sum_{m=2}^{M} a_{(m-2)N+N} + a_{(m-2)N+N-1} + \dots + a_{(m-2)N+N-(m-1)}\right]
\end{align*}

\noindent We now argue by contradiction. Suppose that the gaps are not all equal, and choose $x_1$ to be a maximal gap. Note, if the gaps are not all equal, the set of maximal gaps is a proper nonempty subset, so there must be some maximal gap is cyclically adjacent to a strictly smaller one; relabel so that $x_1$ is maximal. Thus,
\[
x_1> x_i
\qquad
\text{for all }i\neq1.
\]


We compare the equation for $x_1$ first with that of $x_2$, and then with that of $x_N$. Since $\phi_\alpha'$ is strictly increasing, a strict inequality between two gaps produces the same strict inequality between their $\phi_\alpha'$-values.

Then
\begin{align*}
x_{1} > x_{2} &\implies \phi_\alpha '(x_{1}) > \phi_\alpha '(x_{2}) \\
&\implies a_{0} - \left[\sum_{m=2}^{M} a_{(m-2)N+1} + a_{(m-2)N+N} + \dots + a_{(m-2)N+N-m+2}\right] \\
&\quad > a_{0} - \left[\sum_{m=2}^{M} a_{(m-2)N+2} + a_{(m-2)N+1} + a_{(m-2)N+N} + \dots + a_{(m-2)N+N-m+3}\right] \\
\end{align*}
so
\[-\left(\sum_{m=2}^{M} a_{(m-2)N+N-m+2}\right) > -\left(\sum_{m=2}^{M} a_{(m-2)N+2}\right).\]

Then translating the condition $\mathbf{c}\in\ker(A)$, we obtain
\begin{equation}\label{odd 1}
    \sum_{m=2}^{M} \phi_\alpha '(x_{2}+\dots+x_{m+1}) > \sum_{m=2}^{M} \phi_\alpha '(x_{N-m+2}+\dots+x_{1}).
\end{equation}

Similarly, 
\begin{align*}
x_{1} > x_{N} &\implies \phi_\alpha '(x_{1}) > \phi_\alpha '(x_{N}) \\
&\implies a_{0} - \left[\sum_{m=2}^{M} a_{(m-2)N+1} + a_{(m-2)N+N} + \dots + a_{(m-2)N+N-m+2}\right] \\
&\quad > a_{0} - \left[\sum_{m=2}^{M} a_{(m-2)N+N} + a_{(m-2)N+N-1} + \dots + a_{(m-2)N+N-(m-1)}\right] 
\end{align*}

so 
\[\implies -\left(\sum_{m=2}^{M} a_{(m-2)N+1}\right) > -\left(\sum_{m=2}^{M} a_{(m-2)N+N-(m-1)}\right),\]
and thus, 
\begin{equation}\label{odd 2}
   \sum_{m=2}^{M} \phi_\alpha '(x_{N-m+1}+\dots+x_{N}) > \sum_{m=2}^{M} \phi_\alpha '(x_{1}+\dots+x_{m})
\end{equation}

\noindent Combining inequalities \ref{odd 1} and \ref{odd 2},
\[
\sum_{m=2}^{M} \phi_\alpha '(x_{2}+\dots+x_{m+1}) + \phi_\alpha '(x_{N-m+1}+\dots+x_{N}) > \sum_{m=2}^{M} \phi_\alpha '(x_{N-m+2}+\dots+x_{1}) + \phi_\alpha '(x_{1}+\dots+x_{m})
\]

On the other hand, maximality of $x_1$ directly gives, for every $m$,

$$ x_2+\cdots+x_{m+1} \leq x_1+\cdots+x_m, $$
 and

$$ x_{N-m+1}+\cdots+x_N \leq x_{N-m+2}+\cdots+x_N+x_1, $$

because \(x_{N-m+1}\leq x_1\). Strict monotonicity of \(\phi_\alpha'\) then gives the reverse inequality between the total sums, which is a contradiction. Thus the corresponding points are equally spaced on $\mathbb{T}$, so the $N$th roots of unity give the unique critical configuration.


\subsection{$N$ Even}

Throughout this subsection, let
$N=2M$. In the same way as $N$ odd case, we may assume $\gamma_0=0$ after  rotation, and we write
\[
0=\gamma_0<\gamma_1<\cdots<\gamma_{N-1}<2\pi.
\]
Define the consecutive gaps
\[
x_k=\gamma_k-\gamma_{k-1},
\qquad\text{ for } k=1,\dots,N-1,
\]
and
\[
x_N=2\pi-\gamma_{N-1}.
\]
Thus
\[
x_1+\cdots+x_N=2\pi.
\]
All indices involving the $x_k$ will be understood modulo $N$.

The argument follows the same structure as in the odd case. The only difference occurs for the $M$-step interaction. Since $N=2M$, moving $M$ steps forward and moving $M$ steps backward from a fixed point reaches the same point. Thus there is only one $M$-step interaction rather than a pair of forward and backward interactions.

\begin{lemma}
     Let $N\in \mathbb{N}$ be even and let $\{e^{i\gamma_k}\}_{0\leq k < N} \subset \mathbb{T}.$ Then   the system from     
    \[
\nabla\left \|\sum_{0 \leq k < N} \frac{1}{z - e^{i \gamma_k}}\right\|^2_\alpha = \nabla \left( \sum_{0 \leq k < j \leq N-1} \phi_\alpha (\gamma_j - \gamma_k)\right)=0\]

 where $$\phi_\alpha (\gamma_j - \gamma_k) = \int_0^1
\frac{\cos(\gamma_j - \gamma_k)- s}
{1 + s^2 - 2s \cos(\gamma_j - \gamma_k)}(1 - s)^\alpha \, ds, $$
has the matrix representation $A\mathbf{c}=\mathbf{0}$ where

\[
A =
\left[
I - T_N
\;\middle|\;
T_N^{-1} - T_N
\;\middle|\;
\cdots
\;\middle|\;
T_N^{-(M-2)} - T_N
\;\middle|\;
T_N^{-(M-1)}
\right].
\]

$$T_N= \begin{bmatrix} 0 & 1 & 0 & \dots & 0 \\ 0 & 0 & 1 & \dots & 0 \\ \vdots & & \ddots & & \vdots  \\ 0 & 0 & \dots & 0 & 1 \\ 1 & 0 & \dots & 0 & 0 \end{bmatrix} \in M_{N\times N}(\mathbb{R}), \text{ and} $$
$$\mathbf{c}= \left[c_{1,1}, c_{2,1}, \dots, c_{N,1} \mid c_{1,2}, c_{2,2}, \dots, c_{N, 2} \mid \dots \mid c_{1,M}, \dots, c_{N,M}\right]^T =[\mathbf{c}_1 \mid \mathbf{c}_2 \mid \dots \mid \mathbf{c}_M]^T.$$

For $1\leq m\leq M$, define
\[
c_{k,m}
=
\phi_\alpha'
\left(
x_k+x_{k+1}+\cdots+x_{k+m-1}
\right),
\]
and let
\[
\mathbf{c}_m
=
[c_{1,m},c_{2,m},\dots,c_{N,m}]^T.
\]
Then
\[
\mathbf{c}
=
[\mathbf{c}_1\mid\mathbf{c}_2\mid\cdots\mid\mathbf{c}_M]^T.
\]
\end{lemma}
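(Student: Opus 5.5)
We follow the proof of the odd-case lemma almost verbatim, and modify only the treatment of the $M$-step interaction.

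\textbf{Setting up the cyclic system.} Since $G$ depends only on differences of the angles, we have $\sum_{i}\partial G/\partial\gamma_i=0$. We therefore adjoin the redundant equation $\partial G/\partial\gamma_0=0$, which gives the system the same cyclic $N$-row form as before. We then write
\[
\frac{\partial G}{\partial\gamma_i}=\sum_{k\neq i}\varepsilon_i(k)\,\phi_\alpha'\bigl(\varepsilon_i(k)(\gamma_i-\gamma_k)\bigr)
\]
and group the $N-1$ terms by cyclic step distance $m\in\{1,\dots,M\}$. For $1\le m\le M-1$, the points $\gamma_{i+m}$ and $\gamma_{i-m}$ are distinct. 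The same four-case computation as in the odd lemma, using $\phi_\alpha'(2\pi-\theta)=-\phi_\alpha'(\theta)$, shows that these two terms contribute $c_{i-m+1,m}-c_{i+1,m}$. This quantity is the $i$th entry of $(T_N^{-(m-1)}-T_N)\mathbf c_m$, exactly as in the odd case.

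\textbf{The $M$-step term.} For $m=M=N/2$ we have $i+M\equiv i-M \pmod N$, so only one term appears. We treat the case $i<i+M$ and the wrap-around case separately. In either case the term reduces to $\pm\phi_\alpha'$ of one of the two arcs $x_{i+1}+\cdots+x_{i+M}$ and $x_{i-M+1}+\cdots+x_i$. These two arcs are complementary, since their sum is $2\pi$, so the oddness relation $\phi_\alpha'(2\pi-\theta)=-\phi_\alpha'(\theta)$ gives $c_{i+1,M}=-c_{i-M+1,M}$. Consequently the single term can be written as $c_{i-M+1,M}$, which is the $i$th entry of $T_N^{-(M-1)}\mathbf c_M$. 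Equivalently, it can be written as $-c_{i+1,M}$, or as the average $\tfrac12(c_{i-M+1,M}-c_{i+1,M})$.

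The main point to check is the sign bookkeeping, so that the $M$-th block is exactly $T_N^{-(M-1)}$ and not $-T_N$ or the half-difference. To do this, I will carry out the computation explicitly for $i<i+M$, where the term is $-\phi_\alpha'(\gamma_{i+M}-\gamma_i)=-c_{i+1,M}=c_{i-M+1,M}$. For the wrap-around case $i>i+M$, the term is $\phi_\alpha'(\gamma_i-\gamma_{i-M})=c_{i-M+1,M}$ directly. Both cases agree, which confirms the block.

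\textbf{Conclusion.} Summing over $m$, the $i$th equation becomes
\[
\sum_{m=1}^{M-1}\bigl((T_N^{-(m-1)}-T_N)\mathbf c_m\bigr)_i+\bigl(T_N^{-(M-1)}\mathbf c_M\bigr)_i=0.
\]
Stacking these equations for $i=1,\dots,N$ gives $A\mathbf c=\mathbf 0$ with the stated block matrix $A$.
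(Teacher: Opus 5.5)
Your proposal is correct and follows the paper's proof essentially step for step. The steps $m\le M-1$ are taken over from the odd case, and the single $M$-step term is identified with $c_{i-M+1,M}=\bigl(T_N^{-(M-1)}\mathbf c_M\bigr)_i$ through the complementary-arc relation $c_{i+1,M}=-c_{i-M+1,M}$; your explicit wrap/no-wrap check just spells out the paper's remark that either orientation gives the same contribution (though, by that same relation, a $-T_N$ block would also be valid, so your check confirms the stated block rather than ruling out alternatives).
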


\begin{proof}

As in the odd case, since $G$ depends only on differences of the angles, it is invariant under simultaneous rotation. Hence
\[
\sum_{i=0}^{N-1}\frac{\partial G}{\partial\gamma_i}=0.
\]
After fixing $\gamma_0=0$, the critical-point equations are those
corresponding to $i=1,\dots,N-1$. We adjoin the redundant equation
\[
\frac{\partial G}{\partial\gamma_0}
=
-\sum_{i=1}^{N-1}\frac{\partial G}{\partial\gamma_i}
=
0.
\]
Thus we may index the critical-point equations cyclically by
$i=0,\dots,N-1$. We now use the structure of these equations to obtain
the matrix representation $A\mathbf{c}=\mathbf{0}$. 

Fix $i$.

For each
\[
m=1,\dots,M-1,
\]
we proceed exactly like in the $N$ odd case.
The case $m=M$ is different. Since $N=2M$,
\[
i+M\equiv i-M\pmod N,
\]
so moving $M$ steps forward and moving $M$ steps backward reaches the
same point. Hence this interaction is counted only once.
The backward $M$-step contribution is
\[
c_{i-M+1,M}
=
\phi_\alpha'
\left(
x_{i-M+1}+\cdots+x_i
\right).
\]
Equivalently, if we write the same interaction using the forward arc,
then
\[
x_{i-M+1}+\cdots+x_i
=
2\pi-
\left(
x_{i+1}+\cdots+x_{i+M}
\right),
\]
and therefore
\[
c_{i-M+1,M}
=
-\phi_\alpha'
\left(
x_{i+1}+\cdots+x_{i+M}
\right)
=
-c_{i+1,M}.
\]
Thus either orientation gives the same single $M$-step contribution.
Consequently, the $i$th critical-point equation is
\[
\frac{\partial}{\partial\gamma_i}\|f\|_\alpha^2
=
\sum_{m=1}^{M-1}
\left(
c_{i-m+1,m}-c_{i+1,m}
\right)
+
c_{i-M+1,M}
=
0.
\]
For $m=1,\dots,M-1$, we follow $N$ odd exactly and get
\[
c_{i-m+1,m}-c_{i+1,m}
=
\left(
(T_N^{-(m-1)}-T_N)\mathbf{c}_m
\right)_i.
\]
For the final $M$-step term,
\[
c_{i-M+1,M}
=
\left(
T_N^{-(M-1)}\mathbf{c}_M
\right)_i.
\]
Hence
\[
(A\mathbf{c})_i
=
\sum_{m=1}^{M-1}
\left(
(T_N^{-(m-1)}-T_N)\mathbf{c}_m
\right)_i
+
\left(
T_N^{-(M-1)}\mathbf{c}_M
\right)_i
=
0.
\]
Therefore,
\[
A\mathbf{c}
=
\left[
I_N-T_N
\;\middle|\;
T_N^{-1}-T_N
\;\middle|\;
\cdots
\;\middle|\;
T_N^{-(M-2)}-T_N
\;\middle|\;
T_N^{-(M-1)}
\right]
\begin{bmatrix}
\mathbf{c}_1\\
\mathbf{c}_2\\
\vdots\\
\mathbf{c}_{M-1}\\
\mathbf{c}_M
\end{bmatrix}
=
\mathbf{0}.
\]
\end{proof}

\subsubsection{Kernel Analysis}

As in the odd case, we use an explicit description of the kernel together with the strict monotonicity of $\phi_\alpha'$ to study the possible critical configurations.

We proceed in three steps. First, we determine the dimension of $\ker(A)$. We then exhibit an explicit basis for the kernel by showing that the proposed vectors lie in $\ker(A)$ and are linearly independent. Finally, we use this basis representation together with the monotonicity of $\phi_\alpha'$ to obtain restrictions on the gaps $x_1,\dots,x_N$.

\begin{lemma}
Let \[A =\left[I - T_N\;\middle|\;T_N^{-1} - T_N\middle|\;\cdots \;\middle|\; T_N^{-(M-2)} - T_N \;\middle|\; T_N^{-(M-1)} \right]. \]
Then we have
\begin{equation}\label{n even dim}
    \dim\ker(A)=MN-N=(M-1)N.
\end{equation}

We claim that
\begin{equation}\label{n even span}
    \ker(A)= \operatorname{span} \{V_0,V_1,\dots,V_{(M-1)N-1}\}.
\end{equation}

The vector $V_0$ is defined as in the odd case:
\[
V_0=
[1\;1\;\dots\;1
\mid
\mathbf{0}
\mid
\dots
\mid
\mathbf{0}]^T.
\]

For
\[
2\leq m\leq M-1,
\qquad
1\leq k\leq N,
\]
define
\[
V_{(m-2)N+k}
=
[
\mathbf{v}_{(m-2)N+k}^{(1)}
\mid
\mathbf{v}_{(m-2)N+k}^{(2)}
\mid
\dots
\mid
\mathbf{v}_{(m-2)N+k}^{(M)}
]^T,
\]
where
\[
\mathbf{v}_{(m-2)N+k}^{(1)}
=-\sum_{r=0}^{m-1} \mathbf{e}_{k+r}
\]
\[
\mathbf{v}_{(m-2)N+k}^{(m)}
=
\mathbf{e}_k,
\]
and
\[
\mathbf{v}_{(m-2)N+k}^{(p)}
=
\mathbf{0}
\qquad
\text{for all }p\neq 1,m.
\]

Lastly, define  for  $1\leq k\leq N-1$
\[
V_{(M-2)N+k}
=
[
\mathbf{v}_{(M-2)N+k}^{(1)}
\mid
\mathbf{v}_{(M-2)N+k}^{(2)}
\mid
\dots
\mid
\mathbf{v}_{(M-2)N+k}^{(M)}
]^T,
\]
where
\[
\mathbf{v}_{(M-2)N+k}^{(1)}
=
\mathbf{e}_{k+M},
\]
\[
\mathbf{v}_{(M-2)N+k}^{(M)}
=
\mathbf{e}_k-\mathbf{e}_{k+1},
\]
and
\[
\mathbf{v}_{(M-2)N+k}^{(p)}
=
\mathbf{0}
\qquad
\text{for all }p\neq 1,m .
\]
\end{lemma}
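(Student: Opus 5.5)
The plan mirrors the odd case: compute $\operatorname{rank}(A)$, then show that the proposed vectors lie in $\ker(A)$ and are linearly independent. Throughout I use the shift convention from the odd case, $(T_N\mathbf{c})_i=c_{i+1}$. This means $T_N\mathbf{e}_j=\mathbf{e}_{j-1}$ and $T_N^{-r}\mathbf{e}_j=\mathbf{e}_{j+r}$, with indices mod $N$. I assume $M\geq 2$, i.e.\ $N\geq 4$. For $N=2$ the matrix is just $A=[I_N]$ (the first and last blocks coincide), its kernel is trivial, and $V_0$ would not lie in it. I also read the condition ``$p\neq 1,m$'' in the last family as $p\neq 1,M$.

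\textbf{Dimension.} The key difference from the odd case is the last block $T_N^{-(M-1)}$, which is a permutation matrix and hence invertible. Its $N$ columns are columns of $A$, so $\operatorname{rank}(A)=N$. This is the maximum possible, since $A$ has $N$ rows. Rank--nullity then gives $\dim\ker(A)=MN-N=(M-1)N$, which is \eqref{n even dim}. This is consistent with the redundancy of the adjoined equation: the sum of the rows of $A$ is no longer zero, because $\mathbf{1}^T T_N^{-(M-1)}=\mathbf{1}^T$ does not cancel. That is fine, since $\mathbf{c}$ is a special vector and $A$ itself need not have dependent rows. The proposed list has $1+(M-2)N+(N-1)=(M-1)N$ elements, so it suffices to show that each lies in $\ker(A)$ and that they are linearly independent.

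\textbf{Membership in the kernel.} $AV_0=(I_N-T_N)\mathbf{1}=\mathbf{0}$. For $2\leq m\leq M-1$, the computation is verbatim the odd-case one:
\[
(I_N-T_N)\Bigl(-\sum_{r=0}^{m-1}\mathbf{e}_{k+r}\Bigr)=\mathbf{e}_{k-1}-\mathbf{e}_{k+m-1},\qquad (T_N^{-(m-1)}-T_N)\mathbf{e}_k=\mathbf{e}_{k+m-1}-\mathbf{e}_{k-1},
\]
and these cancel. For the new family $V_{(M-2)N+k}$ with $1\leq k\leq N-1$,
\[
(I_N-T_N)\mathbf{e}_{k+M}=\mathbf{e}_{k+M}-\mathbf{e}_{k+M-1},\qquad T_N^{-(M-1)}(\mathbf{e}_k-\mathbf{e}_{k+1})=\mathbf{e}_{k+M-1}-\mathbf{e}_{k+M},
\]
so again the sum is $\mathbf{0}$.

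\textbf{Linear independence.} Suppose $\sum_p a_pV_p=\mathbf{0}$. For each $2\leq m\leq M-1$, the $m$th block receives contributions only from $V_{(m-2)N+k}$, namely $\mathbf{e}_k$ for $k=1,\dots,N$. Hence all of those coefficients vanish. The $M$th block then receives only $\sum_{k=1}^{N-1}a_{(M-2)N+k}(\mathbf{e}_k-\mathbf{e}_{k+1})$. The vectors $\mathbf{e}_k-\mathbf{e}_{k+1}$ for $k=1,\dots,N-1$ are linearly independent; they form the standard basis of the sum-zero hyperplane, as one sees by reading off coordinates $1,2,\dots$ successively. So these coefficients also vanish. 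What remains in the first block is $a_0\mathbf{1}=\mathbf{0}$, so $a_0=0$. Combined with the dimension count, this proves \eqref{n even span}.

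I expect the only real obstacle to be bookkeeping. One must fix the shift direction consistently, since a sign slip would make the last family fail to lie in the kernel. One must also handle the indexing typos: $p\neq 1,M$, and the degenerate case $N=2$, which should be excluded or treated directly.
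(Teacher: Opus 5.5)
Your proposal is correct and follows the same route as the paper: rank--nullity for the dimension, a direct check that $A$ annihilates each proposed vector, and a block-by-block reading of coefficients for linear independence. Your justification of $\operatorname{rank}(A)=N$ via the invertible last block $T_N^{-(M-1)}$ is more accurate than the paper's appeal to ``the structure of the odd case'', since that argument gave rank $N-1$. Citing the independence of $\mathbf{e}_k-\mathbf{e}_{k+1}$ is a cleaner equivalent of the paper's explicit recursion on the last-block coefficients.
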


\begin{proof}
\noindent\textbf{Proof of \eqref{n even dim}:}

Following the structure of the proof of the odd case, we see that
\[
\dim\ker(A)
=
MN-N
=
(M-1)N.
\]

Thus, to characterize $\ker(A)$, we consider the proposed vectors
\[
V_0,V_1,\dots,V_{(M-1)N-1}
\]
and show that they are linearly independent and belong to $\ker(A)$.

\noindent\textbf{Proof of \eqref{n even span}:}

Our goal is to show that
\[
V_i\in\ker(A)
\qquad
\text{for all }
i\in\{0,\dots,(M-1)N-1\}.
\]

From the odd case, we already have
\[
V_0\in\ker(A).
\]

\noindent Now consider $V_{(m-2)N+k}$ for $2\leq m\leq M-1, \quad 1\leq k\leq N.$
From the block structure,
\[
A\mathbf{V}_{(m-2)N+k}
=
(I-T)\mathbf{v}_{(m-2)N+k}^{(1)}
+
(T^{-(m-1)}-T)\mathbf{v}_{(m-2)N+k}^{(m)}.
\]

From the definitions of the vectors
we obtain
\begin{align*}
A\mathbf{V}_{(m-2)N+k}
&=
(I-T)\mathbf{v}_{(m-2)N+k}^{(1)}
+
(T^{-(m-1)}-T)\mathbf{v}_{(m-2)N+k}^{(m)}\\
&=
(I-T)
\left(-\sum_{r=0}^{m-1} \mathbf{e}_{k+r}
\right)
 +
(T^{-(m-1)}-T)\mathbf{e}_k.
\end{align*}

Now notice that
\[
T\mathbf{e}_k=\mathbf{e}_{k-1}
\qquad\text{and}\qquad
T^{-p}\mathbf{e}_k=\mathbf{e}_{k+p}.
\]

Thus
\begin{align*}
A\mathbf{V}_{(m-2)N+k}
&=
(I-T)
\left(
-\mathbf{e}_k
-\mathbf{e}_{k+1}
-\dots
-\mathbf{e}_{k+(m-1)}
\right)
+
(T^{-(m-1)}-T)\mathbf{e}_k\\
&=
\left[-\mathbf{e}_k
-\mathbf{e}_{k+1}
-\dots
-\mathbf{e}_{k+(m-2)}
-\mathbf{e}_{k+(m-1)}\right]\\
&\quad
+\left[\mathbf{e}_{k-1}
+\mathbf{e}_k
+\mathbf{e}_{k+1}
+\dots
+\mathbf{e}_{k+(m-2)}\right]
+\mathbf{e}_{k+(m-1)}
-\mathbf{e}_{k-1}\\
&=
0.
\end{align*}

Lastly, consider $V_{(M-2)N+k}$ for
\[
1\leq k\leq N.
\]
Then
\begin{align*}
A\mathbf{V}_{(M-2)N+k}
&=
(I-T)\mathbf{v}_{(M-2)N+k}^{(1)}
+
T^{-(M-1)}
\mathbf{v}_{(M-2)N+k}^{(M)}\\
&=
(I-T)\mathbf{e}_{k+M}
+
T^{-(M-1)}
(\mathbf{e}_k-\mathbf{e}_{k+1})\\
&=
\mathbf{e}_{k+M}
-
\mathbf{e}_{k+M-1}
+
\mathbf{e}_{k+M-1}
-
\mathbf{e}_{k+M}\\
&=
0.
\end{align*}

Therefore, all of the proposed vectors belong to $\ker(A)$.

It remains to show that the proposed kernel vectors are linearly independent.

Consider the linear combination
\[\sum_{i=0}^{(M-1)N-1}a_iV_i\]
with coefficients $a_i\in\mathbb{R}.$

\noindent By construction, in the blocks
\[2\leq m\leq M-1,\]
the only nonzero vectors are
\[
\mathbf{v}_{(m-2)N+k}^{(m)}
=
\mathbf{e}_k.
\]
Therefore,
\[
a_1=a_2=\dots=a_{(M-2)N}=0.
\]

Notice that the remaining coupled entries of the linear combination are in the first entries and the $(M-1)N+1^{th}$ entry through $MN^{th}$ entry.

Since $a_1=a_2=\dots=a_{(M-2)N}=0,$ the first entry then reduces to
\[
a_0-a_{(M-2)N+(M+1)}=0,
\]

\[
\mathbf{v}_{(M-2)N+(M+1)}
=
\mathbf{e}_{(M+1)+M}
=
\mathbf{e}_{N+1}.
\]

Next, we show that the coefficients $a_{(M-1)N+1}$ through $a_{MN}$ have a recursive relationship coming from
\[
\mathbf{v}_{(M-2)N+k}^{(M)}
=
\mathbf{e}_k-\mathbf{e}_{k+1}
\]
forcing the coefficients to be 0.

From the definition of the last block of vectors we first have
\[ a_{(M-2)N+1}=0.\]
Then utilizing the recursive relationship we obtain
\begin{align*}
    a_{(M-2)N+2}-a_{(M-2)N+1}=0 &\quad\Rightarrow \quad a_{(M-2)N+2}=0\\
    a_{(M-2)N+3}-a_{(M-2)N+2}=0\ &\quad\Rightarrow \quad a_{(M-2)N+3}=0\\
    \vdots\\
    a_{(M-2)N+(N-1)}-a_{(M-2)N+(N-2)}=0\ &\quad\Rightarrow \quad a_{(M-2)N+(N-1)}=0\\
\end{align*}

Lastly, we see from the definition of the vectors \[a_{(M-2)N+(N-1)}=0.\]


Thus,
\[
a_0
=
a_1
=
\dots
=
a_{(M-2)N+(N-1)}
=
0.
\]

Therefore, the proposed vectors are linearly independent.
\end{proof}

\subsubsection{Uniqueness of the Critical Configuration}

We now use the kernel representation to study the possible critical point configurations.

Take
\[
\sum_{i=0}^{(M-1)N}a_iV_i
\in\ker(A).
\]

By construction,

\[\phi_\alpha'(x_k+\dots+x_{k+m-1})=a_{(m-2)N+k}
\qquad 2\leq m\leq M-1,
\qquad
1\leq k\leq N,\]
and for $1\leq k\leq N$ we have 
\[\phi_\alpha'(x_k+\dots+x_{k+M-1})
=a_{(M-2)N+k}-a_{(M-2)N+k-1}\]

We care about the one step interactions, in particular, 
\begin{align*}
\phi_\alpha'(x_1) &=a_0-\left(\sum_{m=2}^{M-1}a_{(m-2)N+1}+a_{(m-2)N+N}\right)+a_{(M-2)N+M+1}, \\
\phi_\alpha'(x_2)&=a_0-\left(\sum_{m=2}^{M-1}a_{(m-2)N+1}+a_{(m-2)N+2}\right)+a_{(M-2)N+M+2}, \\
\phi_\alpha'(x_N)&=a_0-\left(\sum_{m=2}^{M-1}a_{(m-2)N+N-1}+a_{(m-2)N+N}\right)+a_{(M-2)N+M},\\
\end{align*}

As in $N$ odd case, we proceed by contradiction. Suppose
\[
x_1>x_i
\qquad
\text{for all }i\neq1.
\]

Then,
\[
\phi_\alpha'(x_1)
>
\phi_\alpha'(x_2)
\]
and so,
\begin{align*}
&
a_0
+
a_{(M-2)N+M+1}
-
\left[
\sum_{m=2}^{M-1}
a_{(m-2)N+1}
+
a_{(m-2)N+N}
+\dots+
a_{(m-2)N+N-m+2}
\right]
\\
&>
a_0
+
a_{(M-2)N+M+2}
-
\left[
\sum_{m=2}^{M-1}
a_{(m-2)N+2}
+
a_{(m-2)N+1}
+\dots+
a_{(m-2)N+N}
\right].
\end{align*}

Canceling like terms gives
\[
a_{(M-2)N+M+1}
-
\sum_{m=2}^{M-1}
a_{(m-2)N+N-m+2}
>
a_{(M-2)N+M+2}
-
\sum_{m=2}^{M-1}
a_{(m-2)N+2}.
\]

Similarly, we have \[x_1>x_N,\] 
so then
\[
\phi_\alpha'(x_1)
>
\phi_\alpha'(x_N).
\]
Thus
\begin{align*}
&
a_0
+
a_{(M-2)N+M+1}
-
\left[
\sum_{m=2}^{M-1}
a_{(m-2)N+1}
+
a_{(m-2)N+N}
+\dots+
a_{(m-2)N+N-m+2}
\right]
\\
&>
a_0
+
a_{(M-2)N+N}
-
\left[
\sum_{m=2}^{M-1}
a_{(m-2)N+N}
+
a_{(m-2)N+N-1}
+\dots+
a_{(m-2)N+N-m+1}
\right].
\end{align*}

Again, canceling like terms gives
\[
a_{(M-2)N+M+1}
-
\sum_{m=2}^{M-1}
a_{(m-2)N+1}
>
a_{(M-2)N+M}
-
\sum_{m=2}^{M-1}
a_{(m-2)N+N-m+1}.
\]

Adding these two inequalities gives
\begin{align*}
&
a_{(M-2)N+M+1}
+
a_{(M-2)N+M+1}
-
\sum_{m=2}^{M-1}a_{(m-2)N+1}
-
\sum_{m=2}^{M-1}a_{(m-2)N+N-m+2}
\\
&>
a_{(M-2)N+M+2}
+
a_{(M-2)N+M}
-
\sum_{m=2}^{M-1}a_{(m-2)N+N-m+1}
-
\sum_{m=2}^{M-1}a_{(m-2)N+2}.
\end{align*}

Rearranging,
\begin{align*}
&
a_{(M-2)N+M+1}
-
a_{(M-2)N+M}
-
\sum_{m=2}^{M-1}
a_{(m-2)N+N-m+2}
+
a_{(m-2)N+1}
\\
&>
a_{(M-2)N+M+2}
-
a_{(M-2)N+M+1}
-
\sum_{m=2}^{M-1}
a_{(m-2)N-m+1}
+
a_{(m-2)N+2}.
\end{align*}

Rewriting in terms of $\phi_\alpha'(\cdot)$,
\begin{align*}
&
\phi_\alpha'(x_{M+1}+\dots+x_N)
-
\sum_{m=2}^{M-1}
\phi_\alpha'(x_{N-m+2}+\dots+x_{N+1})
+
\phi_\alpha'(x_1+\dots+x_m)
\\
&>
\phi_\alpha'(x_{M+2}+\dots+x_1)
-
\sum_{m=2}^{M-1}
\phi_\alpha'(x_{N-m+1}+\dots+x_N)
+
\phi_\alpha'(x_2+\dots+x_{m+1}).
\end{align*}

Rearranging once more gives
\begin{align*}
&
\phi_\alpha'(x_{N+1}+\dots+x_N)
+
\sum_{m=2}^{M-1}
\phi_\alpha'(x_{N-m+1}+\dots+x_N)
+
\phi_\alpha'(x_2+\dots+x_{m+1})
\\
&>
\phi_\alpha'(x_{N+2}+\dots+x_N+x_1)
+
\sum_{m=2}^{M-1}
\phi_\alpha'(x_{N-m+2}+\dots+x_N+x_1)
+
\phi_\alpha'(x_1+\dots+x_m).
\end{align*}

On the other hand, maximality of $x_1\geq x_i$ for all $i\neq 1,$  shows for each $2\leq m\leq M-1,$

the inequality above gives
\[
\phi_\alpha'(x_{N-m+1}+\dots+x_N)
<
\phi_\alpha'(x_{N-m+2}+\dots+x_N+x_1)
\]
and
\[
\phi_\alpha'(x_2+\dots+x_{m+1})
<
\phi_\alpha'(x_1+\dots+x_m).
\]
 Strict monotonicity of $\phi_\alpha'$ then gives the reverse inequality between the total sums, which is a contradiction. 

\subsection{Positive Definiteness of the Hessian}
We have shown that the equally spaced configuration is a unique critical point.
We now show that the Hessian of the interaction function is positive
definite throughout the configuration domain
\[\Omega= \{(\gamma_1,\dots,\gamma_{N-1}): 0<\gamma_1<\cdots<\gamma_{N-1}<2\pi\}.
\]
Since the equally spaced configuration is a critical point, strict convexity will imply that it is the unique global minimizer. We then use continuity of $G$ on $\overline{\Omega}$ to include configurations in which two or more poles coincide.
See that 

$$
H(\gamma_1, \dots, \gamma_{N-1})
=
\begin{pmatrix}
\dfrac{\partial^2 \|f\|_\alpha^2}{\partial \gamma_1^2}
&
\dfrac{\partial^2 \|f\|_\alpha^2}{\partial \gamma_1 \partial \gamma_2}
&
\dots
&
\dfrac{\partial^2 \|f\|_\alpha^2}{\partial \gamma_1 \partial \gamma_{N-1}}
\\[1em]
\dfrac{\partial^2 \|f\|_\alpha^2}{\partial \gamma_2 \partial \gamma_1}
&
\dfrac{\partial^2 \|f\|_\alpha^2}{\partial \gamma_2^2}
&
\dots
&
\dfrac{\partial^2 \|f\|_\alpha^2}{\partial \gamma_2 \partial \gamma_{N-1}}
\\
\vdots & \vdots & \ddots & \vdots
\\
\dfrac{\partial^2 \|f\|_\alpha^2}{\partial \gamma_{N-1} \partial \gamma_1}
&
\dfrac{\partial^2 \|f\|_\alpha^2}{\partial \gamma_{N-1} \partial \gamma_2}
&
\dots
&
\dfrac{\partial^2 \|f\|_\alpha^2}{\partial \gamma_{N-1}^2}
\end{pmatrix}
$$

Then
$$
\begin{aligned}
H(\gamma_1, \dots, \gamma_{N-1})
&=
\begin{pmatrix}
\displaystyle \sum_{\substack{0 \le l \le N-1 \\ l \ne 1}} \phi_\alpha''(\gamma_l-\gamma_1) & - \phi_\alpha''(\gamma_2-\gamma_1) & \dots & - \phi_\alpha''(\gamma_{N-1}-\gamma_1) \\[1em]
- \phi_\alpha''(\gamma_2-\gamma_1) & \displaystyle \sum_{\substack{0 \le l \le N-1 \\ l \ne 2}} \phi_\alpha''\bigl(\varepsilon_2(l)(\gamma_l-\gamma_2)\bigr) & \dots & - \phi_\alpha''(\gamma_{N-1}-\gamma_2) \\
\vdots & \vdots & \ddots & \vdots \\[0.5em]
- \phi_\alpha''(\gamma_{N-1}-\gamma_1) & - \phi_\alpha''(\gamma_{N-1}-\gamma_2) & \dots & \displaystyle \sum_{\substack{0 \le l \le N-1 \\ l \ne N-1}} \phi_\alpha''(\gamma_{N-1}-\gamma_l)
\end{pmatrix} \\[1.5em]
&= \begin{pmatrix}
    \phi_\alpha''(\gamma_1) & & 0 \\
    & \ddots & \\
    0 & & \phi_\alpha''(\gamma_{N-1})
\end{pmatrix} + \sum_{0<n<m<N} \phi_\alpha''(\gamma_m-\gamma_n) \mathbf{B}_{n,m}
\end{aligned}
$$

where 
$$
(\mathbf{B}_{n,m})_{i,j} = \begin{cases} 
    1 & \text{if } i=j=n \text{ or } i=j=m \\
    -1 & \text{if } (i,j)=(n,m) \text{ or } (i,j)=(m,n) \\
    0 & \text{otherwise}
\end{cases}
$$

The Hessian is positive definite at $(\gamma_1,\dots,\gamma_{N-1})$ provided that the diagonal matrix \[D(\gamma_1,\dots,\gamma_{N-1})= \begin{pmatrix} \phi_\alpha''(\gamma_1) & & 0 \\ & \ddots & \\ 0 & & \phi_\alpha''(\gamma_{N-1}) \end{pmatrix}\] is positive definite and \[\phi_\alpha''(\gamma_m-\gamma_n) \mathbf{B}_{n,m}(\gamma_1,\dots,\gamma_{N-1})\] is positive semi-definite for all $n,m$. Indeed, for $\mathbf{x}\in\mathbb{R}^{N-1}$, 

$$
\begin{aligned}
  \left \langle  \phi_\alpha''(\gamma_m-\gamma_n) \mathbf{B}_{n,m} \mathbf{x}, \mathbf{x}  \right\rangle &=  \phi_\alpha''(\gamma_m-\gamma_n)   \left \langle \mathbf{B}_{n,m} \mathbf{x}, \mathbf{x}  \right\rangle\\
  &= \phi_\alpha''(\gamma_m-\gamma_n)  \left \langle (0, \dots, x_n-x_m \text{ (nth spot)}, 0,\dots, x_m-x_n \text{ (mth spot)}, 0,\dots,0), (x_1,\dots,x_{N-1})\right\rangle\\
  &= \phi_\alpha''(\gamma_m-\gamma_n) (x_n^2-2x_nx_m +x_m^2)\\
   &= \phi_\alpha''(\gamma_m-\gamma_n) (x_n-x_m)^2\\
   &\geq 0 \\
   &\iff \phi_\alpha''(\gamma_m-\gamma_n)\geq 0.
\end{aligned}
$$

For every
\[
(\gamma_1,\dots,\gamma_{N-1})\in\Omega,
\]
we have
\[
0<\gamma_k<2\pi
\]
and, whenever $0<k<j<N$,
\[
0<\gamma_j-\gamma_k<2\pi.
\]
By Lemma~\ref{ABF convexity lemma},
\[
\phi_\alpha''(\theta)>0
\qquad
\text{for every }\theta\in(0,2\pi).
\]
Hence $D(\gamma_1,\dots,\gamma_{N-1})$ is positive definite and each
matrix
\[
\phi_\alpha''(\gamma_m-\gamma_n)B_{n,m}
\]
is positive semi-definite. Therefore
\[
H(\gamma_1,\dots,\gamma_{N-1})
\]
is positive definite for every point of $\Omega$. It follows that $G$ is strictly convex on $\Omega$. Since
\[\nabla G\left(\frac{2\pi}{N},\dots,\frac{2\pi(N-1)}{N}\right)=0,\]
the equally spaced configuration is the unique minimizer of $G$ in $\Omega$.

It remains only to consider the boundary of the configuration space. For $\alpha>0$, the interaction function extends continuously to $\theta=0$, since
\[ \phi_\alpha(0) = \int_0^1(1-s)^{\alpha-1}\,ds = \frac{1}{\alpha}. \]
Thus $G$ extends continuously to the compact simplex
\[
\overline{\Omega} =\{(\gamma_1,\dots,\gamma_{N-1}):0\leq\gamma_1\leq\cdots\leq\gamma_{N-1}\leq2\pi\}.\]
Let
\[\mathcal{C}=\left\{e^{2\pi i k/N}:k=0,\dots,N-1\right\}\]
denote the equally spaced configuration, and let
\[
\Gamma_{\mathcal C}=\left(\frac{2\pi}{N},\dots,\frac{2\pi(N-1)}{N}\right)\]
be its angle vector.

Now let $\mathcal{C}'$ be any other configuration of $N$ points on $\mathbb{T}$, with angle vector
\[
\Gamma_{\mathcal C'}\in\overline{\Omega}.
\]
Consider the line segment joining the two configurations,
\[
\Gamma(t)=\Gamma_{\mathcal C}+t(\Gamma_{\mathcal C'}-\Gamma_{\mathcal C}),\qquad0\leq t\leq1.
\]
It is easy to check that $\Omega$ is convex, so then
\[
\Gamma(t)\in\Omega
\qquad
\text{for every }0\leq t<1,
\]
even if $\mathcal{C}'$ lies on the boundary of $\Omega$. Now define
\[
h(t)=G(\Gamma(t)).
\]

Since $\mathcal{C}$ is a critical configuration, the derivative of $h$ at $0$ is
\[
h'(0)=\nabla G(\Gamma_{\mathcal C})\cdot\left(\Gamma_{\mathcal C'}-\Gamma_{\mathcal C}
\right)= 0.
\]

For $0\leq t<1$,
\[
h''(t)
=
\left(
\Gamma_{\mathcal C'}-\Gamma_{\mathcal C}
\right)^T
H(\Gamma(t))
\left(
\Gamma_{\mathcal C'}-\Gamma_{\mathcal C}
\right).
\]
Since the Hessian is positive definite throughout $\Omega$ and $\mathcal{C}'\neq\mathcal{C}$,
\[h''(t)>0.\]
Thus $h'$ is strictly increasing. Since $h'(0)=0$, it follows that
\[
h'(t) >0 \qquad \text{ for all } t\in(0,1),
\]
and so
\[
h(t)>h(0) \qquad \text{ for all } t\in(0,1).
\]
If $\mathcal{C}'$ is an interior configuration, we take $t=1$ and obtain
\[
G(\mathcal{C}')>G(\mathcal{C}).
\]
If $\mathcal{C}'$ lies on the boundary of $\overline{\Omega}$, then continuity of $G$ gives
\[
G(\mathcal{C}')=\lim_{t\to1^-}h(t).
\]
Since $h$ is strictly increasing on $(0,1)$, for any fixed
$t_0\in(0,1)$,
\[
G(\mathcal{C}') \geq h(t_0)>h(0)=G(\mathcal{C}).
\]
Therefore,
\[
G(\mathcal{C}')>G(\mathcal{C})
\]
for every configuration $\mathcal{C}'\neq\mathcal{C}$. Hence the equally spaced configuration $\mathcal{C}$ is the unique global minimizer on $\overline{\Omega}$.
\section{Alternate proof of Theorem \ref{ABF Theorem 1}}

Recall that we wish to minimize
\[
\left\|
\sum_{k=0}^{N-1}\frac{1}{z-e^{i\gamma_k}}
\right\|_{(g)}^2
=
N\left\|\frac{1}{z-1}\right\|_{(g)}^2
+
K_g
\sum_{0\leq k<j<N}
\phi_\alpha(\gamma_j-\gamma_k).
\]
Since the first term is independent of the configuration of the points, the problem reduces to minimizing the pairwise interaction energy
\[
\sum_{0\leq k<j<N}
\phi_\alpha(\gamma_j-\gamma_k).
\]
Thus, the weighted Bergman norm minimization problem may be viewed as a discrete energy minimization problem determined by the angular differences between points on the unit circle.

This second approach is motivated by the linear programming method for minimizing discrete energies on spheres. In \cite{cohnKumar2007}, Cohn and Kumar show that sharp configurations minimize the energy associated with every completely monotonic function of squared Euclidean distance. We recall a version of their Theorem 1.2 below.


\begin{theorem}[Cohn, Kumar]\label{cohnkumar theorem}
    Let $f:(0,4]\to\mathbb{R}$ be completely monotonic, and let
$C\subset S^{n-1}$ be a sharp arrangement. If $C'\subset S^{n-1}$ is any subset satisfying $|C'|=|C|$, then
\[
\sum_{\substack{x,y\in C'\\ x\neq y}}
f\bigl(|x-y|^2\bigr)
\geq
\sum_{\substack{x,y\in C\\ x\neq y}}
f\bigl(|x-y|^2\bigr).
\]

If $f$ is strictly completely monotonic, then equality implies that $C'$ is also a sharp configuration and the same distances occur in $C$ and $C'$.  In that case, if $C$ is listed in Table 1 but not on the last line, then
\[
C'=AC
\]
for some $A\in O(n)$; that is, $C'$ and $C$ are isometric.
\end{theorem}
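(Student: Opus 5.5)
The statement is the Cohn--Kumar universal optimality theorem, and I would follow their linear programming (Delsarte--Yudin) strategy.

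\textbf{Reduction to inner products.} For $x,y\in S^{n-1}$ we have $|x-y|^2=2-2\langle x,y\rangle$. Set $F(t)=f(2-2t)$ on $[-1,1)$. Complete monotonicity of $f$ on $(0,4]$ means $F$ is absolutely monotonic, that is $F^{(j)}\geq0$ for all $j$. Recall that a sharp arrangement $C$ is one with $m$ distinct inner products $t_1<\dots<t_m$ between distinct points that is also a spherical $(2m-1)$-design.

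\textbf{Auxiliary polynomial.} I will construct a polynomial $h$ of degree at most $2m-1$ satisfying three conditions:
\begin{enumerate}
\item $h\le F$ on $[-1,1)$;
\item $h(t_i)=F(t_i)$ for every $i$;
\item in the Gegenbauer expansion $h=\sum_k h_kP^{(n)}_k$, every coefficient $h_k$ with $k\geq1$ is nonnegative.
\end{enumerate}
The natural candidate is the Hermite interpolant of $F$ that matches value and derivative at each $t_i$ (only the value if $t_1=-1$). The interpolation error formula gives
\[
F(t)-h(t)=\frac{F^{(2m)}(\xi)}{(2m)!}\prod_i(t-t_i)^2\ \ge 0,
\]
with one factor $(t+1)$ instead of a square when $t_1=-1$, which is still nonnegative on $[-1,1]$. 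So conditions 1 and 2 follow from absolute monotonicity.

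\textbf{The bound.} Positive definiteness of Gegenbauer polynomials gives
\[
\sum_{x,y\in C'}P_k^{(n)}(\langle x,y\rangle)=\Bigl\|\sum_{x\in C'}Y_k(x)\Bigr\|^2\geq0 .
\]
Hence for any $C'$ with $|C'|=N$,
\[
\sum_{x\neq y}F\ \ge\ \sum_{x\neq y}h\ =\ N^2h_0-Nh(1)+\sum_{k\ge1}h_k\sum_{x,y}P_k^{(n)}\ \ge\ N^2h_0-Nh(1).
\]
For $C$ both inequalities are equalities. The first holds because $F=h$ at all the $t_i$. The second holds because $C$ is a $(2m-1)$-design, so the sums over $C$ of $P_k^{(n)}$ vanish for $1\le k\le 2m-1$, and these are the only $k$ with $h_k\neq0$.

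\textbf{Main obstacle: condition 3.} This is where I expect the real work. I would write the Hermite interpolant in Newton form,
\[
h=\sum_j F[\,\text{first } j \text{ nodes}\,]\prod_{\text{earlier nodes}}(t-t_i),
\]
ordering the nodes $t_1,t_1,t_2,t_2,\dots$. The divided differences of $F$ are nonnegative because $F$ is absolutely monotonic. It then suffices to show that each partial product $\prod(t-t_i)$ arising this way has nonnegative Gegenbauer coefficients. This should come from the structure of sharp configurations: the annihilator $\prod(t-t_i)$ is tied to Christoffel--Darboux kernels of the Gegenbauer family, and Cohn--Kumar verify positivity of these partial products via properties of adjacent orthogonal polynomials whose roots are the $t_i$. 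I would first check this directly for the case needed here, $n=2$ with the regular $N$-gon, where $P_k$ is $\cos(k\theta)$ and the $t_i$ are $\cos(2\pi j/N)$.

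\textbf{Uniqueness.} When $f$ is strictly completely monotonic, equality forces the error term to vanish at every inner product of $C'$. So all inner products of $C'$ lie among the $t_i$, and all Gegenbauer sums over $C'$ with $h_k>0$ vanish, making $C'$ a design of the same strength. Therefore $C'$ is sharp with the same distances. The final isometry claim rests on the known uniqueness of the configurations in their Table~1, which I would cite rather than reprove; for the $N$-gon on $S^1$ it is immediate, since equal consecutive gaps force a regular polygon.
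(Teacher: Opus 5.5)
The paper gives no proof of this statement: it quotes Theorem~1.2 of \cite{cohnKumar2007} and uses it as a black box in the second proof. The relevant comparison is therefore with Cohn and Kumar's own linear-programming argument, which your outline follows. Its scaffolding is right. $F(t)=f(2-2t)$ is absolutely monotonic on $[-1,1)$. The Hermite error term gives $h\le F$, with equality at the $t_i$. Positive definiteness of the $P^{(n)}_k$ gives $\sum_{x\ne y}F\ge N^2h_0-Nh(1)$ once $h_k\ge0$ for $k\ge1$. Finally, $C$ attains this bound because its inner products are the nodes and it is a $(2m-1)$-design. But all of that is routine. The theorem lives in your condition~3, which you only say ``should come from'' the structure of sharp configurations. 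As written, you have proved the inequality only conditionally, so there is a genuine gap.

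Concretely, put $q_i(t)=\prod_{j\le i}(t-t_j)$, with $q_0=1$.

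\textbf{What condition 3 requires.} When $t_1>-1$, your Newton basis elements are $q_{i-1}q_i$ and $q_i^2$. You therefore need nonnegative Gegenbauer linearization coefficients, together with nonnegativity of \emph{all} Gegenbauer coefficients of each $q_i$, constant term included. The condition $h_k\ge0$ for $k\ge1$ alone is not preserved under products.

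\textbf{The full product $q_m$.} The design property gives $\int q_m r\,d\mu=q_m(1)r(1)/N$ whenever $\deg r\le m-1$. Taking $r=P_l$, $l\le m-1$, shows that the $P_l$-coefficient of $q_m$ is a positive multiple of $q_m(1)/N>0$, so $q_m$ is fine.

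\textbf{The partial products $q_i$, $1\le i<m$.} Here that identity gives nothing directly, and this is the heart of Cohn--Kumar. Taking $r=(1-t)s$ with $\deg s\le m-2$ shows that $q_m$ is orthogonal to all polynomials of degree at most $m-2$ with respect to the adjacent measure $(1-t)\,d\mu$. Hence the $t_i$ are roots of a combination of two consecutive adjacent orthogonal polynomials. Their partial-products theorem then makes each $q_i$ a nonnegative combination of the adjacent polynomials of degree $\le i$. Each adjacent polynomial is a positive multiple of the Christoffel--Darboux kernel $\sum_{k\le i}P_k(1)P_k(t)/\|P_k\|^2$, and this transfers the nonnegativity to the Gegenbauer basis. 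None of this is in your proposal, and even for $n=2$, the only case the paper needs, the intermediate $q_i$ require such an argument.

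\textbf{Uniqueness.} Your uniqueness step needs $h_k>0$ for all $1\le k\le\deg h$. This is true, via strictly positive divided differences, but you do not show it. Also, when $t_1=-1$ your $h$ has degree $2m-2$ and the error term involves $F^{(2m-1)}$. Equality then yields only a $(2m-2)$-design, not one ``of the same strength.'' For the regular $N$-gon on $S^1$ this is harmless, since inner products in $\{\cos(2\pi j/N)\}$ already force $C'$ to be a rotation of $C$.
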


\begin{remark}
    Their proof applies not only to sharp arrangements but also to the vertices of a regular 600-cell, though the latter is not relevant for this result.
\end{remark}
To apply Theorem~\ref{cohnkumar theorem}, we rewrite the interaction
\[
\sum_{0\leq k<j<N}
\phi_\alpha(\gamma_j-\gamma_k)
\]
as an energy depending on squared Euclidean distance. We then verify that the resulting potential is completely monotonic. Finally, we use the fact that the $N$th roots of unity form a sharp configuration on $S^1$ by Table 1 in \cite{cohnKumar2007}. This approach actually extends their minimization result to more general weights.

\begin{theorem}\label{extension theorem}
Let $g:[0,1]\to[0,\infty)$ be an integrable function with
$g\not\equiv0$ and satisfies \eqref{g condition}.
Then, for every integer $N\geq1$ and every collection of points
$\{a_k\}_{0\leq k<N}\subset\mathbb{T}$,
\[
\left\|
\sum_{0\leq k<N}\frac{1}{z-a_k}
\right\|_{(g)}
\geq
\left\|
\sum_{0\leq k<N}
\frac{1}{z-e^{2\pi i k/N}}
\right\|_{(g)}.
\]
In particular, for the power weights
\[
g_\alpha(t)=t^\alpha,
\]
the equally spaced configuration uniquely minimizes the $A^2_\alpha$ norm for every $\alpha>0$.
\end{theorem}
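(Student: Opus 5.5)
The plan is to express the configuration-dependent part of the norm as an energy that is a completely monotonic function of squared chordal distance, and then apply Theorem~\ref{cohnkumar theorem} with $n=2$ and $C$ the $N$th roots of unity.

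\textbf{Step 1: the interaction kernel for general $g$.} Expand in monomials, exactly as in the $g_\alpha$ case. Write $\|z^n\|_{(g)}^2=K_g\pi\int_0^1 r^{n}g(1-r)\,dr$ after the substitution $r=|z|^2$. The cross term between $1/(z-a)$ and $1/(z-b)$ is then a constant multiple of
\[
\Phi_g(\theta)=\sum_{n\ge0}\cos((n+1)\theta)\int_0^1 s^n g(1-s)\,ds ,\qquad \theta=\arg(b\bar a).
\]
Summing the geometric series inside the integral gives
\[
\Phi_g(\theta)=\int_0^1\frac{\cos\theta-s}{1+s^2-2s\cos\theta}\,g(1-s)\,ds .
\]
Condition \eqref{g condition} is what makes this integral converge, uniformly in $\theta$, near $s=1$. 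It also guarantees that every simplest fraction with poles on $\mathbb{T}$ lies in $A^2_{(g)}$. Coincident poles are allowed, since the energy is finite at $\theta=0$.

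\textbf{Step 2: rewrite the kernel in terms of squared distance.} Set $u=|a-b|^2=2-2\cos\theta\in[0,4]$. Then $1+s^2-2s\cos\theta=(1-s)^2+su$ and $\cos\theta-s=(1-s)-u/2$. Hence
\[
\Phi_g=F(u)=\int_0^1\frac{(1-s)-u/2}{(1-s)^2+su}\,g(1-s)\,ds .
\]
Now decompose $\frac{(1-s)-u/2}{(1-s)^2+su}=-\frac{1}{2s}+\frac{(1-s)(1+s)/(2s)}{(1-s)^2+su}$.

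The first piece, $-\frac{1}{2s}$, does not depend on $u$. It contributes only a configuration-independent constant to the energy. Its integrability near $s=0$ needs $g(1-s)$ bounded near $s=0$. If that fails, I will split the integral at $s=\tfrac12$ and handle the piece on $[0,\tfrac12]$ directly, since there the dependence on $u$ is smooth.

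The second piece is $\int_0^1 \frac{w(s)}{(1-s)^2+su}\,ds$ with $w(s)=\frac{(1-s^2)g(1-s)}{2s}\ge0$. For each fixed $s>0$, the function $u\mapsto 1/((1-s)^2+su)$ is completely monotonic on $(0,4]$, and strictly so. A nonnegative mixture of such functions is again completely monotonic, and strictly so when $g\not\equiv0$.

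To justify differentiating under the integral, note that the $k$th derivative produces $s^k w(s)/((1-s)^2+su)^{k+1}$. For $u$ bounded away from $0$ this is dominated by $C_u\,g(1-s)/s$ near $s=0$, and it is bounded near $s=1$. So for $u\in(0,4]$ everything is finite, using integrability of $g$ together with the cutoff near $s=0$ described above. Note that $u=0$ is excluded in Theorem~\ref{cohnkumar theorem}, so I must handle coincident poles separately.

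\textbf{Step 3: apply Cohn--Kumar.} The regular $N$-gon is sharp on $S^1$ (Table 1 of \cite{cohnKumar2007}). For distinct poles, Theorem~\ref{cohnkumar theorem} applied to $F$ minus its constant part gives the inequality.

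For configurations with coincident poles, I will perturb to nearby distinct configurations and use continuity of $F$ at $u=0$, where $F(0)=\int_0^1 g(1-s)/(1-s)\,ds<\infty$ by \eqref{g condition}. The inequality then passes to the limit.

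For uniqueness when $g=g_\alpha$: strict complete monotonicity, together with the equality clause of Theorem~\ref{cohnkumar theorem}, forces any distinct-point minimizer to be isometric to the $N$-gon. For $S^1$ this is the line of Table 1 that is not the excluded last line. For a coincident configuration, I expect to get strictness by noting that $F$ is strictly decreasing near $0$, so separating the coincident points slightly lowers the energy. Alternatively, I can use the convexity argument of Section~2 for $\alpha\le1$.

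I expect the main obstacle to be the integrability bookkeeping near $s=0$ for general $g$, namely splitting off the $-1/(2s)$ term legitimately. The uniqueness at the boundary $u=0$ is the other delicate point.
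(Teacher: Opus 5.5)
You follow the paper's route: express the pair interaction as a function of $u=|a-b|^2$, make it completely monotonic on $(0,4]$ by removing a constant, apply Theorem~\ref{cohnkumar theorem} to the sharp $N$-gon, and handle coincident poles by continuity via \eqref{g condition}. There is, however, a real gap in your Step~2 decomposition.

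\textbf{The decomposition diverges.} Near $s=0$ both of your pieces behave like $\pm g(1-s)/(2s)$. So $-\int_0^1 \frac{g(1-s)}{2s}\,ds=-\infty$ and $\int_0^1 \frac{w(s)}{(1-s)^2+su}\,ds=+\infty$ unless $\int_0^{1/2} g(1-s)\,s^{-1}\,ds<\infty$. Boundedness of $g(1-s)$ near $s=0$ is not the relevant condition. The integrability condition fails for every power weight, since $g_\alpha(1)=1$, which is exactly the case where you want uniqueness.

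For the same reason your domination bound $C_u\,g(1-s)/s$ is not integrable at order $k=0$. The fallback (``the dependence on $u$ is smooth on $[0,\tfrac12]$'') does not rescue this. Complete monotonicity is a sign condition on every derivative, not a regularity condition.

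\textbf{The fix.} Only order $k=0$ is in trouble. For $k\ge1$ the $k$th $u$-derivative of the full integrand is
$(-1)^k k!\,s^{k-1}(1-s^2)g(1-s)/\bigl(2((1-s)^2+su)^{k+1}\bigr)$, which is integrable and correctly signed. In particular $F$ is decreasing. So subtract the single finite constant $F(4)$ rather than a per-$s$ constant:
\[
F(u)-F(4)=\int_0^1\frac{(1-s)(4-u)}{2(1+s)\bigl((1-s)^2+su\bigr)}\,g(1-s)\,ds .
\]
This integrand is nonnegative for $u\in[0,4]$. It is dominated by $2g(1-s)/(1-s)$ uniformly in $u\in[0,4]$, which is integrable by \eqref{g condition} and also gives continuity at $u=0$. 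For each fixed $s$ it is completely monotonic in $u$ on $(0,4]$. This is exactly the paper's replacement of $\psi_{(g)}$ by $\widetilde\psi_{(g)}=\psi_{(g)}-\psi_{(g)}(-1)$, since $u=4$ corresponds to $t=-1$. With this change the rest of your argument goes through as in the paper.

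\textbf{Uniqueness for $g_\alpha$ at coincident configurations.} You are right to flag this. The paper only passes to the limit there, which gives $\geq$, and never proves strictness separately. Your separation heuristic, however, is not automatic. Moving one of two coincident poles by $\epsilon$ gains $\phi_\alpha(0)-\phi_\alpha(\epsilon)$ in the pair term. That gain is of order $\epsilon^{\alpha}$ for $\alpha<2$ but only of order $\epsilon^{2}$ for $\alpha>2$. Meanwhile the interactions with the remaining poles change at first order in $\epsilon$. You must choose the direction so the linear term is nonpositive, and for $\alpha>2$ you are left with an unresolved second-order comparison.

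For $\alpha\le1$ the convexity argument of Section~2 does cover boundary configurations. For general $\alpha$, the clean route is to run the linear-programming argument behind Theorem~\ref{cohnkumar theorem} directly on multisets. Since $\widetilde\psi_{(g_\alpha)}$ is finite and continuous at $t=1$ and strictly absolutely monotonic, the Hermite remainder formula shows that the auxiliary interpolating polynomial $h$ satisfies $h(1)<\widetilde\psi_{(g_\alpha)}(1)$. Hence any coincident pair makes the bound strict.
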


\begin{remark}
    It is important to notice that for weights  $g:[0,1]\to[0,\infty)$ satisfying \eqref{g condition}, this result only shows global minimization and is missing uniqueness of the configuration that directly correspond to Abakumov, Borichev, and Fedorovskiy's Theorem \ref{ABF Theorem 1}. However, we are able to extend Theorem \ref{ABF Theorem 1} from $\alpha\in (0,1]$ to any $\alpha>0.$
\end{remark}

\begin{proof}
First define
\[
\phi_\alpha(\theta)
:=\int_0^1
\frac{\cos\theta-s}{1+s^2-2s\cos\theta}g(1-s)\,ds
\]
where $g\not\equiv0$ satisfies \eqref{g condition}. Setting, $t=\cos\theta$, define a new potential function
\[
\psi_{(g)}(t)
:=\int_0^1\frac{t-s}{1+s^2-2st}g(1-s)\,ds,
\qquad t\in(-1,1).
\]

Note that any point on $z_j\in S^1$ can be written as
\[
z_j=(\cos\gamma_j,\sin\gamma_j)
\]
where $\gamma_j\in [0,2\pi).$
Then the inner product between two points on the circle will be
\[
\langle z_j,z_k\rangle=\cos(\gamma_j-\gamma_k),
\]
and
\[
|z_j-z_k|^2
=2-2\langle z_j,z_k\rangle
=2-2\cos(\gamma_j-\gamma_k).
\]

Thus,
\[
\sum_{0\leq k<j<N}\phi_\alpha(\gamma_j-\gamma_k)
=\sum_{0\leq k<j<N}\psi_\alpha(\langle z_j,z_k\rangle).
\]

\begin{lemma}
  Given  $g: [0,1] \rightarrow [0,\infty) $ and $g\not\equiv0$  satisfying  \eqref{g condition}, the function $\widetilde{\psi}_{(g)}(t):= \psi_{(g)}(t)-\psi_{(g)}(-1) $ is absolutely monotonic on $[-1,1).$
\end{lemma}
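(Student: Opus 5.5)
The plan is to differentiate $\psi_{(g)}$ directly in $t$ under the integral sign and to observe that every derivative has a nonnegative integrand. Fix $s\in[0,1]$ and write $D(s,t)=1+s^2-2st$. For $t\in[-1,1)$ we have
\[
D(s,t)=(1-s)^2+2s(1-t)>0,
\]
so the kernel $k(s,t)=\frac{t-s}{D(s,t)}$ is smooth in $t$. Its first derivative simplifies nicely:
\[
\partial_t k(s,t)=\frac{D+2s(t-s)}{D^2}=\frac{1-s^2}{D(s,t)^2}.
\]
Since $\partial_t D=-2s$, induction on $k\ge1$ gives the closed form
\[
\partial_t^{k} k(s,t)=k!\,(2s)^{k-1}\,\frac{1-s^2}{D(s,t)^{k+1}}.
\]
For $s\in[0,1]$ and $t<1$ this expression is nonnegative.

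The next step is to justify passing the derivatives through $\int_0^1\,\cdot\,g(1-s)\,ds$. Fix $t_0<1$ and restrict to $t\in[-1,t_0]$. Then
\[
D(s,t)\ge(1-s)^2+2s(1-t_0)\ge c(t_0)>0
\]
uniformly in $s\in[0,1]$; for instance $c(t_0)=\min\{1,\,1-t_0\}\cdot\tfrac12$-type bounds suffice after checking $s$ near $0$ and near $1$ separately. Consequently each $\partial_t^k k(s,t)$ is bounded on $[0,1]\times[-1,t_0]$ by a constant $C_k(t_0)$. Because $g$ is integrable, $C_k(t_0)\,g(1-s)$ is an integrable dominating function. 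Dominated convergence (or the standard differentiation lemma) therefore gives $\psi_{(g)}\in C^\infty([-1,t_0])$ with
\[
\psi_{(g)}^{(k)}(t)=k!\int_0^1(2s)^{k-1}\frac{1-s^2}{(1+s^2-2st)^{k+1}}\,g(1-s)\,ds\ \ge 0,\qquad k\ge1.
\]
Since $t_0<1$ is arbitrary, this holds on all of $[-1,1)$, with one-sided derivatives at $t=-1$, where $D=(1+s)^2$ causes no trouble. Strict positivity also follows from $g\not\equiv0$, but it is not needed here.

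For the zeroth-order condition, note that $\widetilde\psi_{(g)}$ has the same derivatives of order $k\ge1$ as $\psi_{(g)}$, and that $\widetilde\psi_{(g)}(-1)=0$. Since $\widetilde\psi_{(g)}'\ge0$, the function is nondecreasing, so $\widetilde\psi_{(g)}(t)\ge\widetilde\psi_{(g)}(-1)=0$ on $[-1,1)$. Together with the derivative formula, this shows $\widetilde\psi_{(g)}$ is absolutely monotonic on $[-1,1)$.

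The only genuinely delicate point is the uniform lower bound on $D$ needed for domination; it holds on every compact subinterval $[-1,t_0]$ but degenerates as $t\to1$. Condition \eqref{g condition} is not required for the lemma itself. It will matter afterwards, to control the behaviour of $\psi_{(g)}$ as $t\to1^-$, i.e.\ as the squared distance tends to $0$, when converting to a completely monotonic function of $r=2-2t$ on $(0,4]$.
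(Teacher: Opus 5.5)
Your proposal is correct and follows the paper's proof essentially step for step. Both arguments use the closed form $\partial_t^k\bigl(\frac{t-s}{1+s^2-2st}\bigr)=k!\,(2s)^{k-1}\frac{1-s^2}{(1+s^2-2st)^{k+1}}\ge 0$, pass it through the integral against $g(1-s)$, and handle the zeroth-order condition by monotonicity from $\widetilde{\psi}_{(g)}(-1)=0$. Your version additionally supplies the domination argument on $[-1,t_0]$ that the paper leaves implicit, and you correctly note that the paper's computation $\psi_{(g)}(-1)<0$ and strict positivity of $\psi_{(g)}'$ are not needed for this lemma.
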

\begin{proof}
    We wish to show $\widetilde{\psi}_{(g)}^{(k)}(t)\geq 0$ for all $k\geq 0.$  Let $h_s(t):=\frac{t-s}{1+s^2-2st} $ on $[-1,1).$ Then 
    \[h_s^{(k)}(t)
=k!(2s)^{k-1}\frac{1-s^2}{(1+s^2-2st)^{k+1}}
\geq 0 \qquad \text{ for } k\geq 1, \; s\in (0,1), \;t\in [-1,1).\]

Then we have \[\psi_{(g)}^{(k)}(t)=\int_0^1 h_s^{(k)}(t) \;g(1-s) \;ds\geq 0 \qquad \text{ for } k\geq 1.\]
So since $h_s'(t)\geq 0 $ on $[-1,1)$, we know that $h_s$ is increasing on the interval. 
Now see that
$$ \begin{aligned} \psi_g(-1) &= \int_0^1 \frac{-1-s}{1+s^2+2s}\,g(1-s)\,ds\\ &= -\int_0^1\frac{g(1-s)}{1+s}\,ds. \end{aligned} $$

Since \(g\ge0\) and \(g\not\equiv0\), we have

$$\psi_g(-1)<0. $$

Because $\psi_g'(t)>0, $ we see that $\psi_g$ is strictly increasing, so

$$ \psi_g(t)\ge\psi_g(-1). $$

So then $$\widetilde{\psi}_{(g)}(t):= \psi_{(g)}(t)-\psi_{(g)}(-1)\geq0$$ and $$ \widetilde{\psi}_{(g)}^{(k)}(t)= \psi_{(g)}^{(k)}(t)\geq 0 \qquad \text{ for }  k\geq 0 .$$
\end{proof}

Note that since every $N$-point configuration contains the same number of pairwise interactions, adding or subtracting a constant from the potential changes every energy by the same constant. Thus replacing $\psi_{(g)}$ by $\widetilde{\psi}_{(g)}$ does not change the minimizing configurations. Now consider the change of variables 
\[
|x-y|^2=2-2\langle x,y\rangle=2-2t
\quad\Longrightarrow\quad
t=\frac{2-|x-y|^2}{2}=1-\frac{1}{2}|x-y|^2.
\]

So define a new function:
\[
f_{(g)}\bigl(|x-y|^2\bigr)
:=\widetilde{\psi}_{(g)}\left(1-\frac12|x-y|^2\right).
\]
For notation purposes, set $r=|x-y|^2 \in (0,4].$
Then

\[(-1)^kf_{(g)}^{(k)}(r)=(-1)^k \frac{d^{(k)}}{dr^{(k)}}\widetilde{\psi}_{(g)} \left(1-\frac{1}{2}r \right)= (-1)^{2k}k!\frac12
\int_0^1
\frac{s^{k-1}(1-s^2)g(1-s)}{\left(1+s^2-2s(1-\frac12r)\right)^{k+1}}\,ds\]
is completely monotonic.

Finally, Table~1 of \cite{cohnKumar2007} shows that the regular $N$-gon, equivalently the configuration
\[\mathcal{C}=\left\{e^{2\pi i k/N}:k=0,\dots,N-1\right\},\]
is a sharp configuration on $S^1$.

Therefore, the hypotheses of Theorem \ref{cohnkumar theorem} are satisfied.

Hence, for every configuration
\[
\mathcal{C}'\subset S^1
\qquad\text{with}\qquad
|\mathcal{C}'|=|\mathcal{C}|=N,
\]
we have
\begin{align*}
    \sum_{\substack{x,y\in\mathcal{C}'\\x\neq y}}
f_{(g)}\bigl(|x-y|^2\bigr)
&\geq
\sum_{\substack{x,y\in\mathcal{C}\\x\neq y}}
f_{(g)}\bigl(|x-y|^2\bigr)\\
\iff \sum_{\substack{x,y\in\mathcal{C}'\\x\neq y}}
\widetilde{\psi}_{(g)}\bigl(t\bigr)
&\geq
\sum_{\substack{x,y\in\mathcal{C}\\x\neq y}}
\widetilde{\psi}_{(g)}\bigl(t\bigr)\\
\iff \sum_{\substack{x,y\in\mathcal{C}'\\x\neq y}}
\psi_{(g)}\bigl(t\bigr)
&\geq
\sum_{\substack{x,y\in\mathcal{C}\\x\neq y}}
\psi_{(g)}\bigl(t\bigr).
\end{align*}

Thus the equally spaced configuration minimizes the corresponding pairwise interaction energy for distinct $N$ points, and therefore minimizes the associated weighted Bergman norm.

It remains to note that Theorem \ref{cohnkumar theorem} is stated for configurations consisting of distinct points, while we allow the points \(a_k\) to coincide. As in the boundary argument of Section 2.3, this case follows by continuity. Indeed, we assume
$$
\psi_{(g)}(1)=\int_0^1 \frac{g(1-s)}{1-s}\,ds= \int_0^1 \frac{g(u)}{u}\,du<\infty
$$
so then \(f_{(g)}(r)=\widetilde{\psi}_{(g)}(1-r/2)\) extends continuously to $r=0$. Any configuration containing coincident points can be approximated by configurations of $N$ distinct points on $S^1$. Applying Theorem \ref{cohnkumar theorem} to the approximating configurations and passing to the limit shows that the same energy inequality holds when points coincide.

\begin{lemma}\label{alpha is strictly abs mon}
     Given  $g_\alpha:[0,1]\rightarrow [0,\infty)$ defined by $g_\alpha(t)=t^\alpha$.  Then the function $\widetilde{\psi}_{(g_\alpha)}(t):= \psi_{(g_\alpha)}(t)-\psi_{(g_\alpha)}(-1) $ is strictly absolutely monotonic on $(-1,1).$
\end{lemma}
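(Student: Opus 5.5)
Strict absolute monotonicity on $(-1,1)$ means $\widetilde{\psi}_{(g_\alpha)}^{(k)}(t)>0$ for every $k\geq 0$ and every $t\in(-1,1)$. I will reuse the computation from the previous lemma, now with $g=g_\alpha$, so that $g_\alpha(1-s)=(1-s)^\alpha$. The plan is to differentiate under the integral sign and then check that every resulting integrand is positive on a set of positive measure.

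\textbf{Case $k\geq1$.} By the formula from the previous lemma,
\[
\widetilde{\psi}_{(g_\alpha)}^{(k)}(t)=\int_0^1 k!\,(2s)^{k-1}\frac{(1-s^2)(1-s)^\alpha}{(1+s^2-2st)^{k+1}}\,ds .
\]
Differentiation under the integral is justified on compact subsets $[-1,t_0]$ with $t_0<1$. There $1+s^2-2st\geq (1-s)^2+2s(1-t_0)\geq c>0$ uniformly in $s\in[0,1]$, so all the integrands are bounded and dominated convergence applies. For $s\in(0,1)$ and $t\in(-1,1)$ every factor of the integrand is strictly positive: $(2s)^{k-1}>0$, $1-s^2>0$, $(1-s)^\alpha>0$, and the denominator is positive. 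A continuous integrand that is strictly positive on $(0,1)$ has a strictly positive integral, so $\widetilde{\psi}_{(g_\alpha)}^{(k)}(t)>0$.

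\textbf{Case $k=0$.} Since $\psi_{(g_\alpha)}'>0$ on $[-1,1)$, $\psi_{(g_\alpha)}$ is strictly increasing there. Hence for $t>-1$,
\[
\widetilde{\psi}_{(g_\alpha)}(t)=\psi_{(g_\alpha)}(t)-\psi_{(g_\alpha)}(-1)=\int_{-1}^t\psi_{(g_\alpha)}'(u)\,du>0 .
\]
This is exactly why the interval must be the open $(-1,1)$: at $t=-1$ the value is $0$.

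The only delicate point is the interchange of derivative and integral near $t=1$. It is handled by working locally, since strictness is a pointwise statement, and uniform domination is needed only on $[-1,t_0]$ for each $t_0<1$. The pointwise positivity of $g_\alpha(1-s)=(1-s)^\alpha$ on $(0,1)$ is what upgrades the $\geq$ of the general lemma to $>$.

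Finally, I will record the consequence that will be used afterwards. Translating to $f_{(g_\alpha)}(r)=\widetilde{\psi}_{(g_\alpha)}(1-r/2)$, the identity $(-1)^kf_{(g_\alpha)}^{(k)}(r)=2^{-k}\widetilde{\psi}_{(g_\alpha)}^{(k)}(1-r/2)$ shows that $f_{(g_\alpha)}$ is strictly completely monotonic on $(0,4)$. This is the hypothesis needed for the uniqueness clause of Theorem~\ref{cohnkumar theorem}.
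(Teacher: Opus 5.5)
Your proof is correct and follows the paper's argument. For $k\ge 1$ you use the explicit integrand $k!(2s)^{k-1}(1-s^2)(1-s)^\alpha/(1+s^2-2st)^{k+1}$, whose factors are all strictly positive for $s\in(0,1)$, $t\in(-1,1)$; for $k=0$ you use that $\psi_{(g_\alpha)}$ is strictly increasing because $\psi_{(g_\alpha)}'>0$. Your justification of differentiating under the integral sign, via $1+s^2-2st=(1-s)^2+2s(1-t)\ge c>0$ on $[-1,t_0]$, fills in a step the paper leaves implicit.
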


 \begin{proof}
We must show $\widetilde{\psi}_{(g_\alpha)}^{(k)}(t)> 0$ for $k\geq0.$ 

For $k\geq 1$, we already have
$$ \psi_{(g_\alpha)}^{(k)}(t) = k!2^{k-1} \int_0^1 \frac{s^{k-1}(1-s^2)(1-s)^\alpha} {(1+s^2-2st)^{k+1}} \,ds. $$

For
$$ \alpha>0,\qquad -1<t<1,\qquad 0<s<1, $$
every factor in the integrand is strictly positive:

$$ s^{m-1}>0,\qquad 1-s^2>0,\qquad (1-s)^\alpha>0, $$
and
$$ 1+s^2-2st>0. $$

Therefore,
\[\widetilde{\psi}_{(g_\alpha)}^{(k)}(t)> 0 \quad \text{ for } k\geq 1.\]

Now we are left with the $k=0$ case. Since

$$ \psi_{(g_\alpha)}'(t)>0, $$

we have that $\psi_\alpha$ is strictly increasing. Hence for every $t>-1$,

$$ \psi_{(g_\alpha)}(t)>\psi_{(g_\alpha)}(-1). $$

Therefore \[\widetilde{\psi}_{(g_\alpha)}(t)= \psi_{(g_\alpha)}(t)-\psi_{(g_\alpha)}(-1)>0 \quad \text{ for } -1<t<1.\]

 \end{proof}
 
\begin{lemma}
Let $\alpha>0.$ Then let $f_\alpha=f_{(g_\alpha)}$ and so \[f_\alpha\bigl(|x-y|^2\bigr)
:=\widetilde{\psi}_\alpha\left(1-\frac12|x-y|^2\right)\]
is strictly completely monotonic on $0<|x-y|^2<4.$
\end{lemma}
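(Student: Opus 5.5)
The plan is to reduce strict complete monotonicity of $f_\alpha$ to the strict absolute monotonicity already established in Lemma~\ref{alpha is strictly abs mon}, using the affine change of variables
\[
t=1-\tfrac12 r,\qquad r=|x-y|^2 .
\]
This map sends $r\in(0,4)$ bijectively onto $t\in(-1,1)$, which is exactly the open interval on which that lemma gives strict positivity.

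First, record that $\widetilde{\psi}_\alpha$ is $C^\infty$ on $(-1,1)$. To see this, differentiate under the integral sign in
\[
\psi_{(g_\alpha)}(t)=\int_0^1 h_s(t)(1-s)^\alpha\,ds .
\]
For $t$ in a compact subinterval $[-1+\delta,1-\delta]$ we have
\[
1+s^2-2st\ \ge\ (1-s)^2+2s\delta\ \ge\ c_\delta>0
\]
uniformly in $s\in[0,1]$. Hence every $t$-derivative of the integrand is bounded by an integrable function, and dominated convergence justifies the formula
\[
\psi^{(k)}_{(g_\alpha)}(t)=k!\,2^{k-1}\int_0^1\frac{s^{k-1}(1-s^2)(1-s)^\alpha}{(1+s^2-2st)^{k+1}}\,ds
\]
already used in the previous lemma. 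For $k=1$ the factor $s^{0}=1$ causes no issue at $s=0$.

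Next, apply the chain rule. Since $f_\alpha(r)=\widetilde{\psi}_\alpha(1-r/2)$ and the inner map is affine with derivative $-\tfrac12$, induction on $k$ gives
\[
f_\alpha^{(k)}(r)=\left(-\tfrac12\right)^k\widetilde{\psi}_\alpha^{(k)}\!\left(1-\tfrac r2\right),
\]
and therefore
\[
(-1)^k f_\alpha^{(k)}(r)=2^{-k}\,\widetilde{\psi}_\alpha^{(k)}\!\left(1-\tfrac r2\right).
\]
For $0<r<4$ the argument $1-r/2$ lies in $(-1,1)$. Lemma~\ref{alpha is strictly abs mon} then gives $\widetilde{\psi}_\alpha^{(k)}>0$ there for every $k\ge0$, so $(-1)^kf_\alpha^{(k)}(r)>0$ for all $k\ge0$ and all $r\in(0,4)$. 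This is exactly strict complete monotonicity. The $k=0$ case is simply positivity of $\widetilde\psi_\alpha$ on $(-1,1)$.

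The only delicate point is the endpoint behaviour. The statement is on the open interval $0<r<4$, so no endpoint claim is needed. If one wants consistency with Theorem~\ref{cohnkumar theorem} on $(0,4]$, note two things. At $r=4$ ($t=-1$), $f_\alpha(4)=0$, but complete monotonicity on $(0,4]$ only requires nonnegativity there, and strictness is needed only in the interior. At $r\to0$ ($t\to1$), the derivatives may blow up, but that endpoint is excluded. I expect the main, though modest, obstacle to be the justification of differentiation under the integral near $t=\pm1$, which the uniform lower bound on $1+s^2-2st$ on compact subintervals handles.
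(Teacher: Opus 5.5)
Your proof is correct and is the argument the paper intends; the paper's entire proof is the remark that the lemma ``follows directly from Lemma~\ref{alpha is strictly abs mon},'' and your identity $(-1)^k f_\alpha^{(k)}(r)=2^{-k}\,\widetilde{\psi}_\alpha^{(k)}(1-r/2)$, under the affine bijection $(0,4)\to(-1,1)$, is that step written out. You also justify differentiation under the integral sign using the uniform bound $1+s^2-2st\ge(1-s)^2+2s\delta>0$ on compact subintervals, which fills in a detail the paper leaves implicit.
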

The proof follows directly from Lemma \ref{alpha is strictly abs mon}. 
Then Theorem \ref{cohnkumar theorem} implies that the regular $N$-gon is the unique minimizing configuration, up to rotations and reflections of $S^1$.
\end{proof}

\section*{AI Disclosure}
The author used ChatGPT as an assistive tool to check the algebraic consistency in the kernel analysis. The underlying proof strategies were developed by the author who take full responsibility for the content of the manuscript.

\section*{Acknowledgments}

This material is based upon work supported by the National Science Foundation Graduate Research Fellowship Program under Grant No. DGE 2139839. Any opinions, findings, and conclusions or recommendations expressed in this material are those of the author(s) and do not necessarily reflect the views of the National Science Foundation.  

The author would also like to thank their advisor Dr. Brett D. Wick for suggesting the problem as well as their support and guidance throughout the project. 
\bibliographystyle{plain}
\bibliography{references} 

\end{document}